\documentclass[12pt]{article}
\usepackage[a4paper,margin=1in]{geometry}
\usepackage{amsmath,amssymb,amsthm,mathtools}
\usepackage{bm}
\usepackage{cite}
\usepackage{enumitem}
\usepackage{microtype}
\usepackage{mathrsfs}
\usepackage{setspace}
\usepackage[hidelinks]{hyperref}
\hypersetup{pdftitle={Type-II Singularities of Immersed Locally Convex Curves Evolving under Generalized Anisotropic Gage Area-Preserving Flows with Slow Curvature Speeds},pdfauthor={Ze-Yu Ye and Xiao-Liu Wang}}
\allowdisplaybreaks
\numberwithin{equation}{section}

\newtheorem{theorem}{Theorem}[section]
\newtheorem{lemma}[theorem]{Lemma}
\newtheorem{proposition}[theorem]{Proposition}
\newtheorem{corollary}[theorem]{Corollary}
\newtheorem{remark}[theorem]{Remark}

\newcommand{\R}{\mathbb{R}}
\newcommand{\Sm}{\mathbb{S}_{m}}
\newcommand{\ks}{\kappa_{\sigma}}
\newcommand{\Ls}{L_{\sigma}}
\newcommand{\Ws}{W_{\sigma}}
\newcommand{\ph}{\varphi}

\begin{document}

\renewcommand{\thefootnote}{\fnsymbol{footnote}}
\begin{center}
{\LARGE Type-II Singularities of Immersed Locally\\[2mm]
Convex Curves Evolving under Generalized\\[2mm]
Anisotropic Gage Area-Preserving Flows\\[2mm]
with Slow Curvature Speeds$^{*}$\par}
\vspace{0.9cm}
{\large Ze-Yu Ye$^{1}$,\qquad Xiao-Liu Wang$^{2\dagger}$\par}
\vspace{0.25cm}
{\itshape 1. School of Mathematics, Southeast University, Nanjing 210096, Jiangsu, China.\par}
{\itshape Email: 230268271@seu.edu.cn\par}
\vspace{0.18cm}
{\itshape 2. School of Mathematics, Southeast University, Nanjing 210096, Jiangsu, China.\par}
{\itshape Email: xlwang@seu.edu.cn\par}
\vspace{0.55cm}
October 7, 2026
\end{center}

\footnotetext[1]{AMS Subject Classifications: 35B40, 35K15, 35K55, 53E10.}
\footnotetext[2]{Corresponding author: Xiao-Liu Wang}
\renewcommand{\thefootnote}{\arabic{footnote}}

\begin{abstract}
In this paper, we study the singularity behavior of immersed locally convex closed plane curves evolving under generalized anisotropic Gage area-preserving flows with slow curvature speeds of the form $\kappa_\sigma^\alpha$, where $0<\alpha\le1$. Here $\kappa_\sigma$ denotes the anisotropic curvature associated with a prescribed anisotropy function $\sigma$. We prove that every finite-time blow-up is of Type-II provided that the Euclidean length of the evolving curve remains uniformly bounded away from zero. This result holds without any symmetry assumptions on either the initial curve or the anisotropy. When the initial signed area is nonzero, this length lower bound is automatically satisfied, and thus any finite-time singularity is necessarily of Type-II. By means of Hamilton's rescaling method combined with a differential Harnack estimate, we show that along a suitable blow-up subsequence, the rescaled curves at the selected times converge locally to the profile of a translating solution of the corresponding anisotropic curvature flow \(V=\kappa_\sigma^\alpha\). We also establish sufficient conditions for finite-time singularity formation in terms of the initial signed area and anisotropic length. Moreover, we construct locally convex initial curves arbitrarily close to the Wulff curve for which a finite-time Type-II singularity occurs.
\end{abstract}

\noindent\textbf{Keywords:} Area-preserving curvature flow; anisotropic; Type-II blow-up; immersed curves; translating solution.

\section{Introduction}
\subsection{Background and notation}

A plane curvature flow is a one-parameter family of smooth closed curves
\[
 X(\cdot,t):\Sm:=\R/(2m\pi\mathbb Z)\longrightarrow\R^2,
 \qquad t\in[0,T),
\]
which evolves in the direction of its inner unit normal $N$ with speed $V$:
\begin{equation}\label{eq:general-flow}
 \frac{\partial X}{\partial t}(u,t)=V(u,t,\kappa,\theta)N.
\end{equation}
Here $T=X_s$ is the unit tangent, $s$ is arc length, $V$ may depend on the signed curvature $\kappa$, position parameter $u$, tangential angle $\theta$ and time parameter $t$, and the orientation is chosen so that
\[
 N=R_{\pi/2}T,\qquad T_s=\kappa N,
\]
where $R_{\pi/2}$
denotes the counterclockwise rotation matrix in $\R^2$
through an angle $\pi/2$.
The rotation index of a smooth closed curve $X$ is
\[
 m=\frac{1}{2\pi}\int_{X}\kappa\,ds.
\]
Suppose $X$ has strictly positive curvature everywhere. We say that $X$ is \textbf{convex} if \(m=1\), and \textbf{locally convex} if \(m\ge 2\). A locally convex curve with $m\ge2$ is in general immersed and may have self-intersections. Its Euclidean length and signed area are
\begin{equation}\label{eq:length-area}
 L=\int_{X}ds,
 \qquad
 A=-\frac12\int_{X}\langle X,N\rangle\,ds.
\end{equation}
For an immersed curve, $A$ is the algebraic area and may be positive, zero, or negative. Since $\kappa>0$, the tangential angle $\theta$ is a global parameter on $\Sm$ and $\kappa\,ds=d\theta$.
 We now recall some terminology related to the blow-up phenomenon for the power-type curvature flow. For $\alpha>0$, a power-type curvature flow generally takes the form of
 \begin{equation}\label{eq:3}
\frac{\partial X}{\partial t}= \big(\Psi(\theta)\kappa^\alpha-\lambda(t)\big)\,N,\qquad t\in[0,T), 
\end{equation}
where $\Psi(\theta)>0$ is an anisotropic function and $\lambda(t)$ is a nonlocal term related to some geometric quantities such as length and area. We call the flow anisotropic if $\Psi$ is not a constant and isotropic otherwise. We say the flow \eqref{eq:3} develops a singularity at time $T\,(0<T\le\infty)$ if the curvature $\kappa(\cdot,t)$ of the evolving curves $X_t$ satisfies
\[
 \limsup_{t\uparrow T}\kappa_{max}(t)=\infty,
\]
where $\kappa_{max}(t):=\max_{u\in\Sm}\kappa(u,t)$.
 For $\alpha>0$ and $T<\infty$, the singularity is called \textbf{Type-I} if
\begin{equation}\label{eq:type-I-definition}
 \limsup_{t\uparrow T}(T-t)\kappa_{max}^{\alpha+1}(t)<\infty,
\end{equation}
and \textbf{Type-II} if
\begin{equation}\label{eq:type-II-definition}
 \limsup_{t\uparrow T}(T-t)\kappa_{max}^{\alpha+1}(t)=\infty.
\end{equation}

The classical curve shortening flow (CSF) is given by
\begin{equation}\label{eq:CSF}
 V=\kappa.
\end{equation}
Gage--Hamilton \cite{GageHamilton1986} proved that every convex embedded closed curve evolving by CSF shrinks to a point in finite time, with a Type-I singularity.  Then Grayson \cite{Grayson1987} extended the round-point theorem to the case of arbitrary embedded closed curves.  For immersed curves the singularity theory is much richer. Angenent \cite{Angenent1991} studied singularity formation for the CSF and showed that, in the locally convex setting, both Type-I and Type-II singularities can occur. Type-I blow-up is related to homothetically shrinking solutions, the Abresch--Langer curves, whereas Type-II rescaling leads to a translating solution, the Grim Reaper (see \cite{AbreschLanger1986,Angenent1991}).

A natural nonlinear extension is the power-type curvature flow
\begin{equation}\label{eq:power-local}
 V=\frac1\alpha\kappa^\alpha,\qquad \alpha\neq 0,
\end{equation}
or, in the anisotropic setting,
\begin{equation}\label{eq:anis-local}
 V=\frac1\alpha\Psi(\theta)\kappa^\alpha,\qquad
 \alpha\neq 0,\qquad \Psi(\theta)>0.
\end{equation}
For the anisotropic flow \eqref{eq:anis-local} with embedded
convex initial curves, Andrews \cite{Andrews1998} developed
a general theory describing how the exponent $\alpha$ affects
the evolution. In particular, only a Type-I singularity occurs
when $\alpha>1/3$. Andrews \cite{Andrews2002} further
studied the instability of the flow \eqref{eq:anis-local}   for
$0<\alpha<1/3$ and showed that the isoperimetric ratio
may become unbounded, leading to a Type-II singularity. Such behavior may also occur at
the critical exponent $\alpha=1/3$ for suitable
anisotropies, whereas the isotropic critical case
has only a Type-I singularity. See also Poon--Tsai
\cite{PoonTsai2015} for the relation between the
isoperimetric ratio and the blow-up type.
In the isotropic setting $\Psi\equiv1$, Andrews \cite{Andrews2003}
classified the closed homothetically shrinking
solutions of \eqref{eq:power-local}.  Other related curvature flows with forcing terms
were studied by Chou--Zhu \cite{ChouZhu1999}, who considered
the anisotropic flow
$\beta(\theta)V=g(\theta)\kappa+F$,
where $\beta,g>0$ are smooth $2\pi$-periodic functions and $F$ is a constant.
In the shrinking case, for embedded convex initial curves,
they proved finite-time extinction with Type-I blow-up.
Moreover, the rescaled curves converge smoothly along
a subsequence to a homothetically shrinking solution
of the unforced flow $\beta(\theta)V=g(\theta)\kappa$.
For immersed locally convex curves moving by \eqref{eq:power-local}, Poon–Tsai \cite{PoonTsai2010} treated the fast
regime $\alpha>1$ and proved that finite-time blow-up is of Type-I. Lin--Poon--Tsai \cite{LinPoonTsai2012} studied the slow regime $0<\alpha\le1$ and analyzed both Type-I and Type-II singularities. Moreover, Type-I blow-up is associated with homothetic solutions, while under a special symmetry assumption, a Type-II blow-up develops a translator profile. For the anisotropic flow \eqref{eq:anis-local}, Poon--Tsai  \cite{PoonTsai2015} investigated the corresponding degenerate parabolic equation.  Under the Type-I assumption, they established  subsequential convergence to self-similar profiles, and under the Type-II assumption, they showed that, in the case where the equation defines an embedded convex flow, the blow-up profile near a maximum point is a translating solution of \eqref{eq:anis-local}. For the anisotropic flow \eqref{eq:anis-local} with $\alpha>1$, Wang–Wang \cite{WangWang2023} proved
that for immersed locally convex closed initial curves, any finite-time singularity is of Type-I. They also established subsequential convergence of the rescaled curvature and further obtained full-time convergence under suitable symmetry assumptions on both the anisotropy and the initial curve.
\vskip 10 pt 
We next turn to nonlocal curvature flows. Gage \cite{Gage1986} introduced the area-preserving curvature flow
\begin{equation}\label{eq:Gage-isotropic}
 V=\kappa-\frac{2\pi}{L(t)}
\end{equation}
for embedded convex curves and established global existence together with smooth convergence to a circle. A power-type generalization
\begin{equation}\label{eq:power-AP-isotropic}
 V=\kappa^\alpha-\frac1{L(t)}\int_{X_t}\kappa^\alpha\,ds,
 \qquad \alpha>0,
\end{equation}
was studied for embedded convex curves by Tsai--Wang \cite{TsaiWang2015}.  Other important isotropic nonlocal flows include the length-preserving flow due to Ma--Zhu \cite{MaZhu2012} and the gradient flow of the isoperimetric ratio proposed by Jiang--Pan \cite{JiangPan2008}. For embedded convex initial curves,  these flows typically possess global solutions and converge smoothly to a circle. 
\vskip 10 pt 
For immersed curves, finite-time singularities can occur under suitable initial conditions. For the immersed area-preserving curvature flow
\begin{equation}\label{eq:Gage-immersed}
 V=\kappa-\frac{2m\pi}{L(t)},
\end{equation}
Escher--Ito \cite{EscherIto2005} proved finite-time singularity formation when $A(0)<0$, or when $m\ge2$ and $L^2(0)<4m\pi A(0)$. Nagasawa--Nakamura \cite{NagasawaNakamura2021} studied several nonlocal curvature flows for plane curves with general rotation number, including the area-preserving flow \eqref{eq:Gage-immersed}. They obtained convergence results for global solutions, together with finite-time blow-up criteria, upper estimates for the blow-up time, and lower estimates for the blow-up rate. For the power-type area-preserving flow \eqref{eq:power-AP-isotropic}, \v{S}e\v{s}um--Tsai--Wang \cite{SesumTsaiWang2020} studied immersed locally convex curves and proved finite-time singularity formation under the explicit conditions
\begin{equation}\label{eq:STW-condition}
 A(0)<0
 \qquad\hbox{or}\qquad
 L^2(0)<4m\pi A(0).
\end{equation}
They also obtained convergence to an $m$-fold circle for suitable global solutions. Lippoth \cite{Lippoth2025} obtained further blow-up results for general immersed closed curves under \eqref{eq:Gage-immersed}, including a sufficient condition for finite-time curvature blow-up when the initial signed area is positive and the rotation index is one. He also proved curvature blow-up for initial curves with zero signed area.  Furthermore, people are also interested in the \textbf{type} and the \textbf{profile} of singularities. In the isotropic case, M\"ader-Baumdicker \cite{Mader2018} studied Gage's area-preserving curvature flow with Neumann free boundary conditions and, under suitable assumptions, proved that any finite-time singularity must be of Type-II. Moreover, for convex initial curves, she employed Hamilton's rescaling technique to show that the asymptotic profile is a Grim Reaper or a half Grim Reaper. Dittberner \cite{Dittberner2021} investigated isotropic curvature flows with global forcing terms, including Gage's area-preserving curvature flow, for embedded curves satisfying suitable geometric conditions. She proved global existence for a class of embedded curves by first ruling out Type-I singularities and then excluding Type-II singularities using a distance comparison principle. More recently, Anada--Ishiwata--Ushijima \cite{AnadaIshiwataUshijima2025} considered the isotropic power-type area-preserving flow \eqref{eq:power-AP-isotropic} for $0<\alpha\le1$ with a special class of highly symmetric Abresch--Langer type immersed initial curves, and proved Type-II blow-up. 
\vskip 10 pt 
A natural problem is what happens if we consider nonlocal curvature flows in the anisotropic setting. Let $\sigma\in C^\infty(\R)$ be a positive $2m\pi$-periodic function satisfying
\begin{equation}\label{eq:sigma}
 \sigma(\theta)>0,
 \qquad
 \ph(\theta):=\sigma(\theta)+\sigma''(\theta)>0,
 \qquad \theta\in\Sm.
\end{equation}
Motivated by Green--Osher \cite{GreenOsher1999} and \v{S}ev\v{c}ovi\v{c}--Yazaki \cite{SevcovicYazaki2013}, we define the anisotropic curvature and anisotropic length by
\begin{equation}\label{eq:anis-def}
 \ks=\ph(\theta)\kappa,
 \qquad
 \Ls=\int_X\sigma(\theta)\,ds.
\end{equation}
The associated Wulff curve is parametrized by
\begin{equation}\label{eq:wulff-param}
 \partial\Ws
 =-\sigma(\theta)N(\theta)+\sigma'(\theta)T(\theta),\quad
 \theta\in\Sm.
\end{equation}
This parametrization defines a closed, convex curve when $m=1$, and a closed locally convex curve when $m\ge2$, which may have self-intersections or be multiply covered. Its Euclidean curvature is $\ph^{-1}$ and its anisotropic curvature is identically one. We denote its signed area by $|\Ws|$. Direct calculation gives
\begin{equation}\label{eq:wulff-area}
 |\Ws|
 =\frac12\int_{\Sm}\sigma(\theta)\ph(\theta)\,d\theta
 =\frac12\Ls(\partial\Ws)>0.
\end{equation}
The flow $V=\ks$ is the gradient flow of the anisotropic interfacial energy $\int_X\sigma\,ds$; see Taylor--Cahn \cite{TaylorCahn1994}.  If we consider the gradient flow of the surface energy with the area-preserving constraint, then the following anisotropic version of Gage's area-preserving curvature flow is obtained:
\begin{equation}\label{eq:anis-Gage}
 \frac{\partial X}{\partial t}
 =\left(
 \ks-\frac1{L(t)}\int_X\ks\,ds
 \right)N.
\end{equation}
Wang--Wang \cite{WangWang2025} proved that for embedded smooth convex closed initial curves, the flow \eqref{eq:anis-Gage} exists globally and converges smoothly, up to translation, to the boundary of the homothetic Wulff shape $\sqrt{{A(0)}/{|W_\sigma|}}W_\sigma$. Liu--Tsai--Wang \cite{LiuTsaiWang2025} considered a power-type extension
\begin{equation}\label{eq:anis-power-AP}
 \frac{\partial X}{\partial t}
 =\left(
 \ks^\alpha-\frac1{L(t)}\int_X\ks^\alpha\,ds
 \right)N,\qquad \alpha>0,
\end{equation}
and established analogous global existence and smooth convergence results for embedded convex closed initial curves. There are several other related anisotropic nonlocal or constrained curve flows. We refer, for example, to Bene\v{s}--Yazaki--Kimura \cite{BenesYazakiKimura2011}, Pan--Yang \cite{PanYang2019}, Gao--Zhang \cite{GaoZhang2019}, Li--Wang \cite{LiWang2023}, Liu--Yang--Zhang \cite{LiuYangZhang2024}, Zhao--Yang--Mao--Fang \cite{ZhaoYangMaoFang2024}, Sun \cite{Sun2025}, and Ye--Wang \cite{YeWang2026}. See also the survey of Giga--Po\v{z}\'ar \cite{GigaPozar2022}. 

The present paper focuses on generalized anisotropic Gage area-preserving flows with slow curvature speeds, corresponding to the regime $0<\alpha\le1$ in \eqref{eq:anis-power-AP}:
\begin{equation}\label{eq:main-flow}
 \frac{\partial X}{\partial t}
 =\left(\ks^\alpha-\lambda(t)\right)N,
 \qquad
 \lambda(t)=\frac1{L(t)}\int_{X_t}\ks^\alpha\,ds,
 \qquad 0<\alpha\le1.
\end{equation}
Since
\begin{equation}\label{eq:total-anis}
 \int_{X_t}\ks\,ds
 =\int_{\Sm}\ph(\theta)\,d\theta
 =:K_\sigma,
\end{equation}
Jensen's inequality gives
\begin{equation}\label{eq:lambda-upper}
 0<\lambda(t)\le\left(\frac{K_\sigma}{L(t)}\right)^\alpha.
\end{equation}
The flow preserves the signed area and decreases the anisotropic length. We first show that every finite-time singularity with $\inf_{t<T}L(t)>0$ is of Type-II, and we determine its local blow-up profile by applying Hamilton's rescaling method. We then provide sufficient conditions on immersed locally convex initial curves that guarantee finite-time singularity formation. In contrast to Anada--Ishiwata--Ushijima \cite{AnadaIshiwataUshijima2025}, we impose no symmetry condition, and the anisotropy may be general under \eqref{eq:sigma}.

\subsection{Main results}

Our first two theorems concern the type and local profile of a finite-time singularity under a positive length lower bound. 

\begin{theorem}[Type-II blow-up]\label{thm:typeII}
Let $0<\alpha\le1$, and let $X(\cdot,t),
 \, 0\le t<T<\infty,$ be a smooth immersed locally convex solution of \eqref{eq:main-flow}. Assume
\[
 \limsup_{t\uparrow T}\kappa_{max}(t)=\infty,
 \qquad
 \inf_{0\le t<T}L(t)>0.
\]
Then the singularity at $T$ is Type-II, namely
\begin{equation}\label{eq:typeII-main}
 \limsup_{t\uparrow T}(T-t)\kappa_{max}^{\alpha+1}(t)=\infty.
\end{equation}
\end{theorem}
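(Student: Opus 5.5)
We argue by contradiction, following the strategy used for the isotropic slow regime by Lin--Poon--Tsai and \v{S}e\v{s}um--Tsai--Wang, adapted to the anisotropic curvature $\ks=\ph\kappa$.

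\emph{Setup.} Suppose the singularity is Type-I, i.e.\ $\kappa_{max}^{\alpha+1}(t)\le C_0/(T-t)$ for all $t<T$. Parametrize by the tangent angle $\theta\in\Sm$. The curvature then satisfies the nonlocal degenerate parabolic equation
\[
\kappa_t=\kappa^2\big((\ph\kappa)^\alpha\big)_{\theta\theta}+\kappa^2\big((\ph\kappa)^\alpha-\lambda(t)\big).
\]
The hypothesis $\inf L\ge L_0>0$ together with \eqref{eq:lambda-upper} gives $0<\lambda(t)\le (K_\sigma/L_0)^\alpha=:\Lambda$. Hence the forcing term is uniformly bounded, and near the blow-up time it is negligible compared with the curvature scale.

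\emph{Type-I rescaling and a homothetic limit.} Choose $t_j\uparrow T$ and points $\theta_j$ with $\kappa(\theta_j,t_j)=\kappa_{max}(t_j)\to\infty$. Rescale $X$ about $X(\theta_j,t_j)$ using the parabolic scaling adapted to $V=\ks^\alpha$: space by $(T-t)^{-1/(\alpha+1)}$ and time by $\tau=-\log(T-t)$ (equivalently, Hamilton-type rescaling by $Q_j=\kappa_{max}(t_j)$). Under the Type-I bound, the rescaled curvature is uniformly bounded and bounded below near the base point. The term $\lambda$ scales like $\lambda\,(T-t)^{\alpha/(\alpha+1)}\to0$, so it disappears in the limit.

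To obtain smooth subsequential convergence I will use the Harnack-type and gradient estimates for the anisotropic power flow, as in Poon--Tsai and Wang--Wang, together with the uniform bound on $\lambda$. These give a rescaled limit $\tilde X_\infty$, an ancient (or eternal in $\tau$) locally convex solution of $V=\ks^\alpha$. By a monotonicity argument for the rescaled flow (an anisotropic entropy or Andrews-type quantity), or by the Type-I structure directly, the limit is a homothetically shrinking solution with anisotropic curvature bounded and positive.

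\emph{Contradiction with the length bound.} This is the main obstacle. A Type-I blow-up means the whole curve, or at least the local piece around the maximum, shrinks self-similarly at rate $(T-t)^{1/(\alpha+1)}$. I plan to show the curvature blows up on a set of $\theta$ of definite measure. The key tool is the Harnack estimate, which should give a lower bound
\[
\kappa(\theta,t)\ge c\,\kappa_{max}(t)\quad\text{for }|\theta-\theta_j|\le \delta,
\]
with the interval measured in the $\theta$-variable, which is scale invariant. A homothetic shrinker has no flat pieces, so there is no shrinking arc length there. Iterating over $\theta$, using the compactness of $\Sm$ and the fact that the homothetic limit is a closed curve with $\kappa\asymp 1$ on all of $\Sm$, I want to conclude
\[
\kappa\ge c(T-t)^{-1/(\alpha+1)}\quad\text{on all of }\Sm .
\]
Then
\[
L(t)=\int_{\Sm}\kappa^{-1}\,d\theta\le C(T-t)^{1/(\alpha+1)}\to0,
\]
which contradicts $\inf L>0$.

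The delicate point is ruling out the case where the rescaled limit captures only part of the curve while the rest stays at bounded curvature. I would handle it as follows. In the Type-I scale with $\lambda\to0$, a limit that is a shrinker restricted to a proper $\theta$-subinterval would be a complete non-closed self-similar solution of $V=\ks^\alpha$. For $0<\alpha\le1$ such a solution must have $\kappa\to0$ at its ends, which is incompatible with the uniform positive lower bound on the rescaled curvature coming from the Harnack/Type-I estimate. Hence the limit is closed over all of $\Sm$, and the argument above applies.
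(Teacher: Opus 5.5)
Your endgame is exactly how the paper concludes: a lower bound $\kappa\ge c(T-t)^{-1/(\alpha+1)}$ on all of $\Sm$, whence $L=\int_{\Sm}\kappa^{-1}\,d\theta\to0$. But the step that produces this global lower bound is missing, and the argument you sketch for it is circular.

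\begin{itemize}
\item The Type-I hypothesis gives only an \emph{upper} bound for $U:=(T-t)^{\alpha/(\alpha+1)}v$.
\item A lower bound on a fixed $\theta$-neighbourhood of a maximum point follows from the gradient estimate alone (Lemma~\ref{lem:gradient}, cf.\ Lemma~\ref{lem:concentration}).
\item You give no mechanism that propagates this local bound to all of $\Sm$. A Harnack inequality is of little help, because the diffusion coefficient $\alpha v^p/\ph$ degenerates exactly where $v$ is small.
\end{itemize}

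So the ``uniform positive lower bound on the rescaled curvature'' you invoke to exclude a partial limit is the statement to be proved. The same holds for your assertion that the limit is ``a closed curve with $\kappa\asymp1$ on all of $\Sm$.'' Identifying the limit as a shrinker via an unspecified entropy is also unjustified for this nonlocal anisotropic flow, and it is not needed. Finally, being complete and non-closed with $\kappa\to0$ at the ends over a proper angle interval is not contradictory in itself. The translator of Theorem~\ref{thm:profile} has exactly these features and does arise at Type-II singularities. The contradiction must come from something Type-I supplies, and your proposal never isolates it.

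The missing ingredient is a bounded-displacement estimate, and the argument runs as follows.
\begin{itemize}
\item Since $\inf L>0$ gives $\lambda\le(K_\sigma/\inf L)^\alpha$, the Type-I bound and the gradient estimate yield $|\partial_tX|=|(v-\lambda)N-v_\theta T|\le C(T-t)^{-\alpha/(\alpha+1)}$ in the fixed-angle parametrization.
\item Hence $X(\cdot,t)\to X_T$ uniformly, with $|X(\cdot,t)-X_T|\le Cr(t)$ and $r(t)=(T-t)^{1/(\alpha+1)}$.
\item If $\min U(\cdot,t_j)\to0$, the uniform Lipschitz bound on $U$ gives a Lipschitz limit $U_\infty\ge0$ that is not identically zero but has a zero.
\item On a component $I=(a,b)$ of $\{U_\infty>0\}$, compact subarcs of $X(\cdot,t_j)$ have length $r_j\int\ph U(\cdot,t_j)^{-1/\alpha}\,d\theta\to0$, where $r_j=r(t_j)$. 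So $X_T\equiv x_*$ on $I$, and $Y_j=(X(\cdot,t_j)-x_*)/r_j$ is bounded there.
\item The support-function identity $\frac{d}{d\theta}(Y_j\cdot T)=\ph U(\cdot,t_j)^{-1/\alpha}+Y_j\cdot N$ then bounds $\int_{a'}^{b'}\ph U(\cdot,t_j)^{-1/\alpha}\,d\theta$ uniformly for $[a',b']\Subset I$. This gives $\int_I\ph U_\infty^{-1/\alpha}\,d\theta<\infty$.
\item That is impossible, because $U_\infty(\theta)\le C|\theta-a|$ and $1/\alpha\ge1$.
\end{itemize}

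In your language: a partial limit would have infinite rescaled arc length over a bounded angle interval while staying within bounded distance of a single point. It is this boundedness, not a curvature lower bound, that rules the partial limit out and gives $U\ge c$ on all of $\Sm$.
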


The length lower bound in Theorem~\ref{thm:typeII} cannot be omitted in general, already in the isotropic case $\alpha=1$. In a recent preprint, Cernomazov \cite[Main Result~A, Remarks~1.7 and~2.1]{Cernomazov2026} constructed noncircular locally convex shrinkers of the isotropic flow \eqref{eq:Gage-immersed}. These have zero signed area and shrink by a factor proportional to $\sqrt{T-t}$, so $L(t)\to0$ and the singularity is of Type-I.

Since the signed area is preserved and an immersed closed plane curve satisfies $L^2\ge4\pi|A|$, the lower bound for $L(t)$ is automatic whenever $A(0)\ne0$.
\vskip 10 pt 
Our next theorem identifies the curvature profile near a Type-II blow-up point.

\begin{theorem}[Blow-up profile]\label{thm:profile}
Let $0<\alpha\le1$, and let $X(\cdot,t)$, $0\le t<T<\infty$, be a smooth immersed locally convex solution of \eqref{eq:main-flow} with a Type-II singularity at $T$, and assume
\begin{equation}\label{eq:profile-length}
 \inf_{0\le t<T}L(t)>0.
\end{equation}
Then there exist times $t_j\uparrow T$ and angles $\theta_j\in\Sm$ at which $\ks(\cdot,t_j)$ attains its maximum such that, after passing to a subsequence,
\[
 \theta_j\longrightarrow\theta_*\in\Sm
\]
and
\begin{equation}\label{eq:profile-moving}
 \frac{\ks(\theta_j+\xi,t_j)}{\ks(\theta_j,t_j)}
 \longrightarrow (\cos\xi)^{1/\alpha}
\end{equation}
uniformly for $\xi\in[-\pi/2,\pi/2]$. Equivalently,
\begin{equation}\label{eq:profile-fixed}
 \frac{\kappa(\theta_*+\xi,t_j)}{\kappa(\theta_*,t_j)}
 \longrightarrow
 \frac{\ph(\theta_*)}{\ph(\theta_*+\xi)}(\cos\xi)^{1/\alpha}
\end{equation}
uniformly on $[-\pi/2,\pi/2]$.
\end{theorem}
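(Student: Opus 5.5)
Write the flow in the angle parameter. Since $\kappa>0$, $\theta$ is a global parameter, and the anisotropic curvature $\ks=\ph\kappa$ satisfies
\[
 \partial_t \ks=\ph\,\ks^2\big((\ks^\alpha)_{\theta\theta}+\ks^\alpha-\lambda\big)\cdot\ph^{-1}\cdots,
\]
which is best handled through $u:=\ks^\alpha$. One checks
\[
 u_t=\alpha\,\ph^{-1}u^{1+1/\alpha}\big(u_{\theta\theta}+u-\lambda\big).
\]
Two ingredients from earlier sections will be used: the bound $0<\lambda\le (K_\sigma/\inf L)^\alpha=:\Lambda$, which is available by the length lower bound and \eqref{eq:lambda-upper}, and the differential Harnack estimate for this equation. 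The Harnack estimate will be a bound of Hamilton type,
\[
 \partial_t u\ \ge\ -\frac{C\,u}{t}-C\,u^{1+1/\alpha},
\]
or in integrated form a comparison $u(\theta,t_2)\ge c\,u(\theta,t_1)$ on controlled time intervals. The plan follows Hamilton's rescaling for Type-II singularities, as in Angenent and Lin--Poon--Tsai. Before rescaling we note that $\ks\sim\kappa$ up to the constants $\min\ph,\max\ph$. Hence Type-II in terms of $\kappa_{max}$ is the same as Type-II in terms of $\ks$.

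The first step selects the times. For $T_j\uparrow T$ we choose $(\theta_j,t_j)$ maximizing $M(t)^{\alpha+1}(T_j-t)$ over $t\le T_j-1/j$, where $M(t)=\max\ks(\cdot,t)$. Type-II gives $M_j^{\alpha+1}(T_j-t_j)\to\infty$. We then rescale by
\[
 \tilde u_j(\xi,\tau)=\frac{u(\theta_j+\xi,\,t_j+\tau M_j^{-(1+\alpha)})}{M_j^\alpha}.
\]
In the angle variable no spatial rescaling is needed; this is a feature of the $\theta$-parametrization. The rescaled function satisfies
\[
 \partial_\tau\tilde u=\alpha\,\ph(\theta_j+\xi)^{-1}\tilde u^{1+1/\alpha}\big(\tilde u_{\xi\xi}+\tilde u-\lambda M_j^{-\alpha}\big),
\]
and $\lambda M_j^{-\alpha}\to0$. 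By the choice of times we have $\tilde u\le 1+o(1)$ on $\tau\in[-a_j,b_j]$ with $a_j,b_j\to\infty$, and $\tilde u(0,0)=1$.

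The second step, which I expect to be the main obstacle, is compactness. We need a positive lower bound for $\tilde u$ near $\xi=0$, so that the equation is uniformly parabolic there. We also need regularity estimates that survive degeneration where $\tilde u\to0$, which must happen at $\xi=\pm\pi/2$. We obtain the lower bound from the Harnack inequality together with the bound on $u_\theta$ coming from the maximum principle applied to $u_\theta^2+u^2$, giving $|\tilde u_\xi|\le C$. Positivity then holds on $(-\pi/2,\pi/2)$ locally uniformly. Standard Krylov--Safonov and Schauder estimates yield $C^\infty_{loc}$ convergence, along a subsequence with $\theta_j\to\theta_*$ and $\ph(\theta_j+\xi)\to\ph(\theta_*+\xi)$, to an eternal solution $\tilde u_\infty\le1$ with $\tilde u_\infty(0,0)=1$. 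Passing the Harnack estimate to the limit (the $C/t$ term disappears under rescaling) gives $\partial_\tau\tilde u_\infty\ge0$. The strong maximum principle applied to the spatial maximum then shows $\partial_\tau\tilde u_\infty$ vanishes at the maximum. Since $\partial_\tau\tilde u_\infty$ satisfies the linearized equation and is nonnegative, it vanishes identically.

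The third step identifies the limit. Since it is stationary, $\tilde u_{\xi\xi}+\tilde u=0$ with maximum $1$ at $\xi=0$, so $\tilde u_\infty=\cos\xi$ on $[-\pi/2,\pi/2]$. This is the translator of $V=\ks^\alpha$, which satisfies $\ks^\alpha=\cos(\theta-\theta_0)$. Undoing the power gives $\ks(\theta_j+\xi,t_j)/\ks(\theta_j,t_j)\to(\cos\xi)^{1/\alpha}$. Convergence is locally uniform in the open interval, and it becomes uniform on the closed interval because $0\le\tilde u\le1+o(1)$ and the gradient bound controls the endpoints. Finally, \eqref{eq:profile-fixed} follows from $\kappa=\ks/\ph$ and the shift $\theta_j\to\theta_*$, using the uniform continuity of the limit profile.
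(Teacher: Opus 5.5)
Your overall route is the paper's: Hamilton rescaling in the angle variable with $\tau=q_j^p(t-t_j)$ and $v_j=v/q_j$, the bound on $v^2+v_\theta^2$, a Harnack inequality giving $\partial_\tau u_\infty\ge0$, the strong maximum principle for $w=\partial_\tau u_\infty$, and $u''+u=0$ on the positivity component. There is a genuine gap, though, at the Harnack step, and that step carries the whole argument.

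\textbf{The Harnack step.} Nothing earlier in the paper provides a Harnack inequality; it has to be proved here. The form you posit, $\partial_t u\ge -Cu/t-Cu^{1+1/\alpha}$, does not follow from the equation. With $z=u_t/u$ one finds
\[
 z_t=\frac{\alpha u^p}{\ph}z_{\theta\theta}+\frac{2\alpha u^{p-1}u_\theta}{\ph}z_\theta+pz^2+\frac{\alpha u^{p-1}}{\ph}\bigl(\lambda z-\lambda'\bigr),
\]
and $\lambda'$ is not bounded as $t\uparrow T$.
\begin{itemize}
\item For $\alpha=1$ one has $\lambda'=-\lambda L'/L$ with $L'=-\int_{\Sm}v\,d\theta+2m\pi\lambda$. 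The integral $\int_{\Sm}v\,d\theta$ tends to infinity along times of running maxima, by concentration.
\item For $\alpha<1$, $J'$ is only controlled by $Cv_{\max}^2$.
\end{itemize}
So the error at a minimum of $z$ is global and growing, not of the local form $Cu^{1+1/\alpha}$.

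The missing idea is to prove the Harnack bound directly for the rescaled sequence. There the error is $\frac{\alpha v_j^{p-1}}{\ph}(\eta_jz_j-\eta_j')$, and you must show that \emph{both} $\eta_j$ and $\eta_j'$ tend to $0$ uniformly on compact $\tau$-intervals; you only record $\eta_j\to0$.
\begin{itemize}
\item The derivative bound comes from $\eta_j=J_j/\mathcal L_j$ with $\mathcal L_j=Q_jL\ge Q_j\inf L\to\infty$.
\item Meanwhile $\mathcal L_j'=-\int_{\Sm} v_j\,d\theta+2m\pi\eta_j$ and $J_j'=(\alpha-1)\int_{\Sm}\bigl(-(v_j)_\theta^2+v_j^2-\eta_jv_j\bigr)\,d\theta$ stay bounded by the rescaled gradient estimate.
\item Then $m_j=\min z_j$ satisfies $m_j'\ge m_j^2-\varepsilon_j$ with $\varepsilon_j\to0$.
\item The barrier $-1/(\tau+R)-\sqrt{\varepsilon_j}$ yields $v_j(\theta,\tau_2)\ge\frac{R+\tau_1}{R+\tau_2}e^{-c_j(\tau_2-\tau_1)}v_j(\theta,\tau_1)$ with $c_j\to0$, and this is what passes to the limit.
\end{itemize}

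\textbf{Choice of times.} Maximizing $(T_j-t)M^{\alpha+1}$ only over $t\le T_j-1/j$ can put $t_j$ at the endpoint. That leaves no $\tau>0$ on which $\tilde u\le1+o(1)$. Yet your claim that $\partial_\tau\tilde u_\infty$ vanishes at $\xi=0$ needs exactly $\tilde u_\infty(0,\tau)=1$ for $\tau>0$. That identity comes from $\tilde u_\infty\le1$, $\tilde u_\infty(0,0)=1$ and monotonicity in $\tau$; the spatial maximum alone does not give it. Maximize over all $t\le T_j$ instead, so the weight vanishes at the endpoint and the forward interval is $[0,H_j)$.

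\textbf{Positivity before identification.} You cannot assert positivity on $(-\pi/2,\pi/2)$ before the limit is identified. Instead:
\begin{itemize}
\item work on the component $I$ of $\{u_\infty(\cdot,0)>0\}$ containing $0$;
\item propagate lower bounds forward with the integrated Harnack inequality to get $C^{2,1}_{\mathrm{loc}}$ convergence on $I\times(0,\infty)$;
\item identify $u_\infty=\cos\xi$ there;
\item use a uniform time modulus such as $|v_j(\theta,\tau_2)-v_j(\theta,\tau_1)|\le C_R|\tau_2-\tau_1|^{1/2}$ to reach $\tau=0$. This modulus holds even where $v_j$ degenerates, via the divergence structure of the equation. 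Then conclude $I=(-\pi/2,\pi/2)$.
\end{itemize}
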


The limiting curvature profile in Theorem~\ref{thm:profile} is realized, after the following rescaling, by a translating solution of the corresponding anisotropic flow $V=\ks^\alpha$. Put $q_j=\ks^\alpha(\theta_j,t_j)$, $Q_j=q_j^{1/\alpha}$ and $p=1+1/\alpha$, and let $\tau=q_j^p(t-t_j)$ denote the rescaled time. Define
\[
 \widetilde X_j(\xi,\tau)=Q_j\left[
 X\left(\theta_j+\xi,t_j+\frac{\tau}{q_j^p}\right)-X(\theta_j,t_j)
 \right].
\]
Spatial dilation by $Q_j$ divides curvature by $Q_j$ and preserves the tangent direction. Thus \eqref{eq:profile-moving} is precisely the convergence of the anisotropic curvature of $\widetilde X_j(\cdot,0)$ to $(\cos\xi)^{1/\alpha}$. Moreover,
\[
 \partial_\xi\widetilde X_j(\xi,0)
 =\frac{\ph(\theta_j+\xi)}{\bigl(\ks^\alpha(\theta_j+\xi,t_j)/q_j\bigr)^{1/\alpha}}
 T(\theta_j+\xi).
\]
On compact subintervals of $(-\pi/2,\pi/2)$ the denominator converges to a positive function. Since $\widetilde X_j(0,0)=0$, integration gives $C^1_{\mathrm{loc}}$ convergence to
\begin{equation}\label{eq:translator-param}
 X_\infty(\xi,0)=
 \int_0^\xi \frac{\ph(\theta_*+r)}{(\cos r)^{1/\alpha}}T(\theta_*+r)\,dr,
 \qquad |\xi|<\frac\pi2.
\end{equation}
The positive factor in $\partial_\xi X_\infty$ shows that its unit tangent and normal are $T_\infty=T(\theta_*+\xi)$ and $N_\infty=N(\theta_*+\xi)$. Using $ds_\infty/d\xi=\ph(\theta_*+\xi)/(\cos\xi)^{1/\alpha}$ and $\partial_\xi T_\infty=N_\infty$, we obtain
\[
 \kappa_\infty(\xi)=\frac{(\cos\xi)^{1/\alpha}}{\ph(\theta_*+\xi)},
 \qquad \kappa_{\sigma,\infty}(\xi)=(\cos\xi)^{1/\alpha}.
\]
Set $e=N(\theta_*)$. Since $e\cdot N_\infty=\cos\xi$, the family
\begin{equation}\label{eq:translator-motion}
 X_\infty(\xi,\tau)=X_\infty(\xi,0)+\tau e
\end{equation}
satisfies the normal-velocity law
\begin{equation}\label{eq:local-anis-trans}
 \partial_\tau X_\infty\cdot N_\infty=\kappa_{\sigma,\infty}^\alpha.
\end{equation}
Indeed, its full velocity in the fixed-angle parametrization is
$\kappa_{\sigma,\infty}^\alpha N_\infty-(\kappa_{\sigma,\infty}^\alpha)_\xi T_\infty
=\cos\xi\,N_\infty+\sin\xi\,T_\infty=e$.
Thus it is a translating solution of the anisotropic curvature flow $V=\ks^\alpha$, up to tangential reparametrization. Since $0<\alpha\le1$, its arc length diverges at both endpoints of the angle interval. When $\alpha=1$ and $\sigma\equiv1$, a rigid motion gives
$X_\infty(\xi,\tau)=(\xi,-\log\cos\xi)+\tau(0,1)$, the classical Grim Reaper.
\vskip 10 pt 
We now give sufficient conditions on the initial curve that guarantee finite-time singularity formation.
\begin{theorem}[Finite-time singularity]\label{thm:finite}
Let $0<\alpha\le1$, and let $X_0$ be a smooth immersed locally convex closed initial curve with rotation index $m\ge2$. Suppose that either
\begin{equation}\label{eq:finite-cond}
 A(0)<0
 \qquad\hbox{or}\qquad
 0<\Ls^2(0)<4|\Ws|A(0).
\end{equation}
Then the solution of \eqref{eq:main-flow} develops a singularity at a finite maximal time $T<\infty$.
\end{theorem}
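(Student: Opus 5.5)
We argue by contradiction: assume the solution exists smoothly for all $t\in[0,\infty)$ and track the signed area $A$, which is preserved, together with the anisotropic length $\Ls$, which is nonincreasing. The first ingredient is the evolution of $\Ls$ under \eqref{eq:main-flow}. Using $\partial_t(ds)=-\kappa V\,ds$ and the variation of the anisotropic energy along a normal velocity, we get
\[
 \frac{d\Ls}{dt}=-\int_X \ks\,(\ks^\alpha-\lambda)\,ds
 =-\int_X\ks^{1+\alpha}ds+\frac{K_\sigma}{L}\int_X\ks^\alpha ds\le0 .
\]
The inequality follows from Chebyshev's (or H\"older's) inequality applied to $\int\ks^{1+\alpha}\,ds\cdot L\ge\int\ks\,ds\int\ks^\alpha\,ds$. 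We also have $dA/dt=-\int_X V\,ds=0$.

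\textbf{Case $A(0)<0$.} The quantity to exploit is a Minkowski-type identity involving the anisotropic support function. Writing $p=-\langle X,N\rangle$, one has $A=\frac12\int p\,ds$ and $\Ls=\int_X\sigma\,ds=\int_{\Sm}\sigma\kappa^{-1}\,d\theta$. The mixed-area identity is
\[
 \int_{\Sm}\ph\, p\,d\theta=\int_X\ks\, p\,ds=\Ls ,
\]
obtained by integrating by parts in $\theta$, since $p+p_{\theta\theta}=1/\kappa$. Combining this with the Minkowski inequality for the (possibly immersed) curve and the multiply covered Wulff curve is meant to give $\Ls^2\ge 4|\Ws|A$ with a sign-sensitive form. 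When $A<0$, however, the cleanest route is a direct ODE. Choose the origin suitably, for instance at a time-dependent point. The quantity $\mathcal{F}(t)=\int_X \langle X,X\rangle\,\ph^{-1}\cdots$ is not needed; instead we use the following monotone estimate.

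The key step is to show $\frac{d\Ls}{dt}\le -c<0$, uniformly in $t$, whenever $A\le0$ or $\Ls^2<4|\Ws|A$. For a global solution this would force $\Ls\to-\infty$, which is impossible since $\Ls>0$. To obtain this bound, estimate
\[
 -\frac{d\Ls}{dt}=L\Big(\frac1L\int\ks^{1+\alpha}ds-\frac1L\int\ks\,ds\cdot\frac1L\int\ks^\alpha ds\Big)
\]
from below by a deviation of $\ks$ from its mean. We plan to use the elementary inequality, valid for $0<\alpha\le1$,
\[
 \overline{\ks^{1+\alpha}}-\overline{\ks}\,\overline{\ks^\alpha}\ \ge\ c_\alpha\, \overline{\ks}^{\,\alpha-1}\big(\overline{\ks^2}-\overline{\ks}^2\big)\cdot(\text{correction}),
\]
or, more robustly, to handle the isotropic-like comparison through the anisotropic isoperimetric deficit. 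Specifically, the anisotropic Gage-type inequality $\int\ks^2\,\sigma^{-1}\ldots$ is replaced by the identity $\int\ks(\ks\cdots)$. The deviation is then bounded below using $\int_X\ks\,p\,ds=\Ls$ and Cauchy--Schwarz:
\[
 \Ls-\frac{K_\sigma}{L}\,2A=\int_X\Big(\ks-\frac{K_\sigma}{L}\Big)p\,ds .
\]
Under either hypothesis, the left side is bounded below by a positive constant along the flow. If $A<0$, this is immediate because $\Ls\ge\Ls(\text{lower bound})>0$; note that $L\ge c\Ls$ and $\Ls$ is bounded above. If $\Ls^2<4|\Ws|A$, the condition persists because $\Ls$ decreases and $A$ is fixed, and one uses $K_\sigma/L\ge$ an appropriate bound from $K_\sigma\Ls\ge\ldots$ together with $2|\Ws|=\Ls(\partial\Ws)$. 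Hence the curvature oscillation $\int(\ks-\bar\ks)p\,ds$ is bounded away from zero. Since $|p|$ is controlled by $L$ and the diameter, the $L^2$ deviation of $\ks$ is bounded below, and this feeds into $-d\Ls/dt\ge c$.

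The main obstacle is the last step. We must convert a lower bound on $\int(\ks-\bar\ks)p\,ds$ into a uniform lower bound on $\overline{\ks^{1+\alpha}}-\overline{\ks}\,\overline{\ks^\alpha}$ for $\alpha<1$, where the map $x\mapsto x^\alpha$ is concave. We also need a uniform bound on $|p|$; this requires choosing the origin, for example at a point on the curve so that $|p|\le L$, and bounding $L$ above by $\Ls/\min\sigma$. For the $\alpha<1$ difficulty, we would first try the pointwise inequality $(x-y)(x^\alpha-y^\alpha)\ge \alpha\max(x,y)^{\alpha-1}(x-y)^2$, symmetrized over pairs of points. If $\ks$ stays bounded along a global solution, this closes the argument. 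If $\ks$ does not stay bounded, then $\kappa_{\max}$ is unbounded, which is itself the singularity we want, provided we argue that unbounded curvature on $[0,\infty)$ is excluded by the strict decay of $\Ls$.
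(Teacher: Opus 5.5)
You have the monotonicity of $\Ls$ and the moment identity $\int_X\ks p\,ds=\Ls$ right, but the argument rests on a uniform bound $\mathcal D(t):=-\Ls'(t)\ge c>0$, and for $0<\alpha<1$ you never obtain it.

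\textbf{First gap: no uniform curvature bound.} Your inequality $(x-y)(x^\alpha-y^\alpha)\ge\alpha\max(x,y)^{\alpha-1}(x-y)^2$ has a weight that degenerates where $\ks$ is large. It therefore controls the $L^2$ deviation of $\ks$ only under a time-independent curvature bound, which you do not have for a hypothetical global solution. Your fallback is invalid. A global solution with $\kappa_{\max}$ bounded on every $[0,t]$ but $\sup_{t\ge0}\kappa_{\max}(t)=\infty$ has maximal time $T=\infty$, so it is not the finite-time singularity the theorem asserts. Excluding it ``by the strict decay of $\Ls$'' is circular, since that decay is what you are trying to prove.

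This is exactly the scenario Proposition~\ref{prop:global} in the paper rules out. For a global solution with $\Ls(\infty)>0$, integrability of $\mathcal D$ gives good times where $F=\int v\,d\theta$ is bounded. Here $\Ls(\infty)>0$ follows from the immersed isoperimetric inequality $L^2\ge4\pi|A|$, which your $A<0$ case also needs and omits. The Gagliardo--Nirenberg ODE $F'\le C(F^{p+1}+F^2)$ propagates the bound on $F$, and Lemma~\ref{lem:concentration} turns it into time-independent two-sided bounds on $v$. Parabolic estimates then give $\mathcal D\to0$ and convergence to $C_*^{-1}\partial\Ws$. This forces $A(0)>0$ and $\Ls^2(\infty)=4|\Ws|A(0)\le\Ls^2(0)$, contradicting either hypothesis.

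\textbf{Second gap: the case $\Ls^2<4|\Ws|A$.} Your identity $\Ls-2\frac{K_\sigma}{L}A=\int_X(\ks-K_\sigma/L)p\,ds$ never involves $|\Ws|$. Deducing from the hypothesis that its left side stays away from zero would require something like $K_\sigma\Ls\ge2|\Ws|L$, i.e.\ $\mathrm{Cov}_{\ph\,d\theta}(\sigma,\ks^{-1})\ge0$, and nothing guarantees that. Isotropically the left side equals $(L^2-4m\pi A)/L<0$, so your sign claim is also reversed.

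The static strategy can be repaired, giving a genuinely more elementary route than the paper's. Compare $\ks$ with $\mu:=\lambda^{1/\alpha}$ and use both identities
\[
 \Ls-2\mu A=\int_X(\ks-\mu)\,p\,ds,
 \qquad
 \Ls-\frac{2|\Ws|}{\mu}=\frac1\mu\int_X\sigma(\mu-\ks)\,ds,
\]
with the origin on the curve so that $|p|\le L/2$.
\begin{itemize}
\item On $\{\ks\ge2\mu\}$ one has $(\ks-\mu)(\ks^\alpha-\mu^\alpha)\ge(2^\alpha-1)\mu^\alpha|\ks-\mu|$.
\item On the complement your inequality applies with weight $(2\mu)^{\alpha-1}$.
\end{itemize}
Hence $\int_X|\ks-\mu|\,ds\le C(\mathcal D+\sqrt{\mathcal D})$ with no curvature bound, provided $L\le\Ls(0)/\sigma_{\min}$ and $\mu$ is bounded above and below. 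The upper bound on $\mu$ comes from Jensen and $L\ge2\sqrt{\pi|A|}$. The lower bound holds whenever $\mathcal D\le\epsilon_0$, by H\"older: $\lambda K_\sigma=\int_X\ks^{1+\alpha}ds-\mathcal D\ge K_\sigma^{1+\alpha}L^{-\alpha}-\mathcal D$.

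Multiplying the two identities gives $|\Ls^2-4|\Ws|A|\le C(\mathcal D+\sqrt{\mathcal D})$. Under either hypothesis and monotonicity of $\Ls$, $|\Ls^2(t)-4|\Ws|A|\ge\delta>0$ for all $t$. Hence $\mathcal D\ge c>0$ and $T\le\Ls(0)/c$. This avoids all parabolic regularity and gives an explicit bound on $T$, but none of it is in your proposal as written.
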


\begin{remark}[Initial curves near the Wulff curve]\label{rem:wulff-perturbation}
For every $\sigma$ satisfying \eqref{eq:sigma} and every $m\ge2$, the second condition in \eqref{eq:finite-cond} is satisfied by locally convex closed initial curves arbitrarily close to $\partial\Ws$ in $C^\infty(\Sm)$. The strict inequality persists under sufficiently small smooth perturbations, so these examples need not be symmetric. The construction is given in Subsection~\ref{subsec:wulff-examples}.
\end{remark}

Both conditions in \eqref{eq:finite-cond} imply $A(0)\ne0$. Therefore Theorems~\ref{thm:finite} and \ref{thm:typeII} immediately give the following result.

\begin{corollary}\label{cor:finite-typeII}
If $X_0$ satisfies either condition in \eqref{eq:finite-cond}, then the solution of \eqref{eq:main-flow} develops a finite-time Type-II singularity.
\end{corollary}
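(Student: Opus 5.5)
The corollary will follow by chaining Theorem~\ref{thm:finite} with Theorem~\ref{thm:typeII}; the only task is to verify the latter's hypotheses. Let $X_0$ satisfy one of the conditions in \eqref{eq:finite-cond}. By Theorem~\ref{thm:finite}, the solution of \eqref{eq:main-flow} has a finite maximal time $T<\infty$. I will also use that the maximal time is characterized by curvature blow-up, $\limsup_{t\uparrow T}\kappa_{max}(t)=\infty$. This is the standard continuation criterion for the parabolic equation satisfied by $\ks$ in the angle parametrization: as long as $\kappa$ stays bounded above and away from zero, higher regularity follows. I take it as part of what ``develops a singularity'' means in Theorem~\ref{thm:finite} and the definition preceding \eqref{eq:type-I-definition}.

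Next I show that $A(0)\neq0$. Under the first condition this is immediate. Under the second, $|\Ws|>0$ by \eqref{eq:wulff-area}, so $0<\Ls^2(0)<4|\Ws|A(0)$ forces $A(0)>0$.

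The flow preserves signed area. Indeed, $\frac{d}{dt}A=-\int (\ks^\alpha-\lambda)\,ds=0$ by the choice of $\lambda(t)$. Hence $A(t)=A(0)$ for all $t<T$. The isoperimetric inequality for immersed closed curves, $L^2\ge4\pi|A|$, then gives
\[
 \inf_{0\le t<T}L(t)\ge 2\sqrt{\pi|A(0)|}>0 .
\]
Both hypotheses of Theorem~\ref{thm:typeII} therefore hold, and that theorem yields \eqref{eq:typeII-main}: the singularity is Type-II.

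There is no real obstacle here. The only point needing care is the implicit identification of the finite maximal time with $\limsup\kappa_{max}=\infty$. If Theorem~\ref{thm:finite} is read as giving only $T<\infty$, I would justify this using the continuation criterion from the short-time existence theory for the quasilinear parabolic equation for $\ks$.
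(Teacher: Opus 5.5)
Your proposal is correct and is essentially the paper's argument. Theorem~\ref{thm:finite} gives a finite maximal time at which the curvature blows up; the paper likewise invokes the continuation criterion at the end of that proof. Both conditions in \eqref{eq:finite-cond} force $A(0)\neq0$, and area preservation together with $L^2\ge4\pi|A|$ then supplies the positive length lower bound needed to apply Theorem~\ref{thm:typeII}.
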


The paper is organized as follows. Section~\ref{sec:prelim} gives the evolution equations, the monotonicity of geometric quantities, the gradient estimate and a concentration estimate near points of maximal curvature. Section~\ref{sec:typeII} proves Theorem~\ref{thm:typeII} by showing that the Type-I assumption would force the Euclidean length to vanish. Section~\ref{sec:profile} applies Hamilton's rescaling method and proves the compactness, nonlocal-error, and Harnack estimates needed to identify the blow-up profile in Theorem~\ref{thm:profile}. Finally, Section~\ref{sec:global} establishes uniform curvature bounds for global solutions whose anisotropic length has a positive limit, identifies the limiting Wulff curve, and proves the finite-time singularity criteria in Theorem~\ref{thm:finite}. It also gives the construction described in Remark~\ref{rem:wulff-perturbation}.

\section{Preliminaries}\label{sec:prelim}

\subsection{Evolution equations}

For a locally convex solution of \eqref{eq:main-flow}, one can use the tangential angle to parameterize it. Put $v(\theta,t):=\ks^\alpha(\theta,t)$ and $p=1+1/\alpha\ge2$. A tangential velocity changes only the parametrization, not the geometric image of the curve; see Chou--Zhu \cite{ChouZhu2001} and Gage \cite{Gage1986}. If $V$ and $G$ are the normal and tangential velocities, then $\partial_t\theta=\partial_sV+\kappa G$. Choosing $G=-v_\theta$ with $V=v-\lambda(t)$ therefore keeps $\theta$ independent of time and gives
\begin{equation}\label{eq:X-fixed}
 \frac{\partial X}{\partial t}=(v-\lambda)N-v_\theta T.
\end{equation}
At fixed angle the curvature satisfies $\kappa_t=\kappa^2(V_{\theta\theta}+V)$. Since $\ph$ is independent of time and $v=(\ph\kappa)^\alpha$, this gives
\begin{equation}\label{eq:v-main}
 v_t=\frac{\alpha}{\ph(\theta)}v^p
 \left(v_{\theta\theta}+v-\lambda(t)\right)
 \qquad\hbox{on }\Sm\times(0,T),
\end{equation}
where
\begin{equation}\label{eq:lambda-main}
 \lambda(t)=\frac{J(t)}{L(t)},
 \qquad J(t):=\int_{\Sm}\ph v^{1-1/\alpha}\,d\theta,
 \qquad K_\sigma:=\int_{\Sm}\ph(\theta)\,d\theta.
\end{equation}
We also have
\begin{equation}\label{eq:ds-main}
 ds=\frac{\ph}{v^{1/\alpha}}\,d\theta,
 \qquad L(t)=\int_{\Sm}\frac{\ph}{v^{1/\alpha}}\,d\theta,
 \qquad \Ls(t)=\int_{\Sm}\frac{\sigma\ph}{v^{1/\alpha}}\,d\theta.
\end{equation}
In particular, the Euclidean length satisfies
\begin{equation}\label{eq:Lprime-basic}
 L'(t)=-\int_{\Sm}(v-\lambda)\,d\theta
       =-\int_{\Sm}v\,d\theta+2m\pi\lambda.
\end{equation}
This follows either by differentiating \eqref{eq:ds-main} and using periodicity, or from the first variation of length. Notice that $\lambda$ is the arc-length average of $v$, so $0<\lambda\le v_{max}$.

Short-time existence and preservation of local convexity follow by the standard quasilinear parabolic argument used for anisotropic nonlocal curvature flows; see \cite{ChouZhu2001,LiuTsaiWang2025}. In particular, the solution can be continued beyond any finite time $T$, provided its curvature remains uniformly bounded on $[0,T)$. Indeed, if $v\le M$, then $\lambda\le M$ and the spatial minimum satisfies
\[
 \frac{d}{dt}v_{\min}\ge-\frac{\alpha M}{\ph_{\min}}v_{\min}^p
\]
in the lower Dini derivative sense. Thus $v$ stays uniformly positive on every bounded time interval, and the equation remains uniformly parabolic when the curvature is bounded.

\subsection{Preserved area and non-increasing anisotropic length}

\begin{lemma}\label{lem:monotone}
Along \eqref{eq:main-flow}, the signed area is preserved and the anisotropic length is non-increasing, i.e.,
\begin{equation}\label{eq:Ls-dissipation}
 \frac{dA}{dt}=0,
 \qquad
 \frac{d\Ls}{dt}
 =-\int_{X_t}(\ks-\lambda^{1/\alpha})(\ks^\alpha-\lambda)\,ds\le0.
\end{equation}
\end{lemma}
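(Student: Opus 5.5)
We compute both derivatives from the first variation formulas for a closed curve moving with normal velocity $V=\ks^\alpha-\lambda$ in the direction $N$. Tangential terms only reparametrize the curve, so they do not affect $A$ or $\Ls$. We may therefore work either with the purely normal flow \eqref{eq:main-flow} or with the fixed-angle version \eqref{eq:X-fixed}.

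\textbf{Area.} With the orientation $N=R_{\pi/2}T$ and $A=-\frac12\int\langle X,N\rangle ds$, the standard first variation gives $\frac{dA}{dt}=-\int_{X_t}V\,ds$. The sign is fixed by checking a counterclockwise circle moving inward, whose area must decrease. Hence
\[
 \frac{dA}{dt}=-\int_{X_t}\ks^\alpha\,ds+\lambda(t)L(t)=0,
\]
by the definition of $\lambda$ in \eqref{eq:main-flow}. Alternatively, in the angle parametrization one may use $A=\frac12\int_{\Sm}h(h+h_{\theta\theta})\,d\theta$ with support function $h$ and $h_t=-V$. Then $\frac{dA}{dt}=-\int_{\Sm}V\kappa^{-1}d\theta=-\int V\,ds$ by symmetry of $\partial_\theta^2+1$.

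\textbf{Anisotropic length.} In the $\theta$-parametrization \eqref{eq:ds-main}, $\Ls=\int_{\Sm}\sigma(\theta)\kappa^{-1}d\theta$, and at fixed angle $\kappa_t=\kappa^2(V_{\theta\theta}+V)$. Therefore
\[
 \frac{d\Ls}{dt}=-\int_{\Sm}\sigma(V_{\theta\theta}+V)\,d\theta
 =-\int_{\Sm}(\sigma''+\sigma)V\,d\theta=-\int_{\Sm}\ph V\,d\theta,
\]
after integrating by parts twice on the periodic domain $\Sm$. Since $\ph\,d\theta=\ph\kappa\,ds=\ks\,ds$, this reads
\[
 \frac{d\Ls}{dt}=-\int_{X_t}\ks(\ks^\alpha-\lambda)\,ds.
\]
By the definition of $\lambda$, $\int_{X_t}(\ks^\alpha-\lambda)\,ds=0$. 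We may therefore subtract the constant $\lambda^{1/\alpha}$ from $\ks$ for free, which gives the stated identity \eqref{eq:Ls-dissipation}. Since $x\mapsto x^\alpha$ is increasing on $(0,\infty)$, the factors $\ks-\lambda^{1/\alpha}$ and $\ks^\alpha-\lambda$ always have the same sign. The integrand is thus pointwise nonnegative, and $\frac{d\Ls}{dt}\le0$.

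The only delicate point is bookkeeping of signs and orientation in the area variation, together with checking that the tangential term $-v_\theta T$ in \eqref{eq:X-fixed} does not contribute. The latter holds because it integrates to a boundary term that vanishes by periodicity. Everything else is routine.
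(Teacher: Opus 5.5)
Your proposal is correct and follows the paper's proof. You use the first variation formulas $dA/dt=-\int_{X_t}V\,ds$ and $d\Ls/dt=-\int_{X_t}\ks V\,ds$, subtract $\lambda^{1/\alpha}\int_{X_t}(\ks^\alpha-\lambda)\,ds=0$, and conclude by monotonicity of $x\mapsto x^\alpha$. The only difference is that you derive the $\Ls$ variation explicitly from $\kappa_t=\kappa^2(V_{\theta\theta}+V)$ and two periodic integrations by parts against $\sigma$, whereas the paper simply cites it as a standard formula.
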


\begin{proof}
The standard variation formulas give
\[
 \frac{dA}{dt}=-\int_{X_t}(\ks^\alpha-\lambda)\,ds=0
\]
by the definition of $\lambda$. Also
\[
 \frac{d\Ls}{dt}
 =-\int_{X_t}\ks(\ks^\alpha-\lambda)\,ds
 =-\int_{\Sm}\ph v\,d\theta+\lambda K_\sigma.
\]
Since $\int_{X_t}(\ks^\alpha-\lambda)\,ds=0$,
\[
 \int_{X_t}\ks(\ks^\alpha-\lambda)\,ds
 =\int_{X_t}(\ks-\lambda^{1/\alpha})(\ks^\alpha-\lambda)\,ds.
\]
The integrand is nonnegative because $s\mapsto s^\alpha$ is increasing on $(0,\infty)$, which proves \eqref{eq:Ls-dissipation}.
\end{proof}

\subsection{A gradient estimate}

The following gradient estimate will be used in both the global argument and the blow-up analysis.

\begin{lemma}[Gradient estimate]\label{lem:gradient}
Let $v>0$ solve \eqref{eq:v-main}. Set
\[
 \Psi=v^2+v_\theta^2.
\]
Then for every $t<T$,
\begin{equation}\label{eq:gradient-est}
 \max_{\Sm\times[0,t]}\Psi
 \le
 \max\left\{
 C_\ph\max_{\Sm\times[0,t]}v^2,
 \max_{\Sm}(v^2+v_\theta^2)(\theta,0)
 \right\},
\end{equation}
where
\[
 C_\ph=1+\frac{\|\ph_\theta\|_\infty^2}{4\ph_{min}^2}.
\]
\end{lemma}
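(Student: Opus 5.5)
We apply the maximum principle to $\Psi=v^2+v_\theta^2$ along \eqref{eq:v-main}. The idea is to show that at any new interior maximum of $\Psi$ on $\Sm\times(0,t]$ where $\Psi> C_\ph v^2$, the time derivative of $\Psi$ is nonpositive. That rules out such a maximum unless it is attained at $t=0$, which gives \eqref{eq:gradient-est}. Write $a(\theta,v)=\alpha v^p/\ph(\theta)$, so $v_t=a\,(v_{\theta\theta}+v-\lambda)$.

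\emph{Step 1: evolution of $\Psi$.} Differentiating \eqref{eq:v-main} in $\theta$ gives
\[
 v_{\theta t}=a\,(v_{\theta\theta\theta}+v_\theta)+a_\theta\,(v_{\theta\theta}+v-\lambda),
 \qquad
 a_\theta=\frac{\alpha p v^{p-1}v_\theta}{\ph}-\frac{\alpha v^p\ph_\theta}{\ph^2}.
\]
Hence
\[
 \tfrac12\Psi_t=v v_t+v_\theta v_{\theta t}
 =a\,(v+v_{\theta\theta})\bigl(v_{\theta\theta}+v-\lambda\bigr)+a\,v_\theta v_{\theta\theta\theta}+\dots
\]
It is cleaner to use $\tfrac12\Psi_\theta=v_\theta(v+v_{\theta\theta})$ and $\tfrac12\Psi_{\theta\theta}=(v+v_{\theta\theta})^2+v_\theta(v_\theta+v_{\theta\theta\theta})-v_\theta^2$, which is the same as $v_\theta(v_\theta+v_{\theta\theta\theta})+v_{\theta\theta}^2+2vv_{\theta\theta}+v^2-v_\theta^2$. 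Substituting expresses $\Psi_t$ as $a\Psi_{\theta\theta}$ plus first-order terms in $\Psi_\theta$ plus a remainder.

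\emph{Step 2: analysis at a maximum point.} At a spatial maximum of $\Psi$ with $v_\theta\ne0$ (which holds when $\Psi>C_\ph v^2\ge v^2$), we have $\Psi_\theta=0$, so $v_{\theta\theta}=-v$. Then $v_t=-a\lambda$, and $v_{\theta\theta}+v-\lambda=-\lambda$. Also $\Psi_{\theta\theta}\le0$ gives $v_\theta(v_\theta+v_{\theta\theta\theta})\le 0$, which is the same as $v_\theta v_{\theta\theta\theta}\le -v_\theta^2$. Therefore
\[
 \tfrac12\Psi_t=v(-a\lambda)+v_\theta\bigl[a(v_{\theta\theta\theta}+v_\theta)-a_\theta\lambda\bigr]
 \le -a\lambda v-\lambda v_\theta a_\theta .
\]
Inserting $a_\theta$ gives
\[
 \tfrac12\Psi_t\le -\frac{\alpha\lambda v^{p-1}}{\ph}\Bigl(v^2+p\,v_\theta^2-\frac{v\,\ph_\theta v_\theta}{\ph}\Bigr).
\]
Since $\lambda>0$, it remains to show that the bracket is positive. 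Because $p\ge1$, the bracket is at least $v^2+v_\theta^2-|v v_\theta|\,\|\ph_\theta\|_\infty/\ph_{min}$.

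\emph{Step 3: the constant $C_\ph$ (main obstacle).} We need a condition ensuring $v^2+v_\theta^2> \beta|v v_\theta|$, where $\beta=\|\ph_\theta\|_\infty/\ph_{min}$. This calls for a careful AM--GM step. The estimate $\beta|vv_\theta|\le v_\theta^2+\tfrac{\beta^2}{4}v^2$ gives a bracket of at least $v^2(1-\beta^2/4)$, which is not always positive, so that route fails. Instead, use $p\ge2$ and keep the term $p v_\theta^2\ge 2v_\theta^2$:
\[
 v^2+2v_\theta^2-\beta|vv_\theta|\ \ge\ v^2+v_\theta^2-\tfrac{\beta^2}{4}v^2=\Psi-(C_\ph-1)v^2-\,\cdot\,,
\]
more precisely $\ge \Psi-\tfrac{\beta^2}{4}v^2=\Psi-(C_\ph-1)v^2>v^2>0$ whenever $\Psi>C_\ph v^2$. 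So at any such maximum point $\Psi_t<0$ (in the Dini sense for $\max\Psi$).

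\emph{Step 4: conclusion.} Let $M(t')=\max_{\Sm}\Psi(\cdot,t')$ and let $B=\max\{C_\ph\max_{\Sm\times[0,t]}v^2,\ M(0)\}$. Suppose $M$ exceeds $B$ somewhere in $(0,t]$. Take the first time $t_1$ at which $M$ reaches some level strictly between $B$ and that value. At the maximum point at time $t_1$ we have $\Psi>C_\ph v^2$, so Step 3 gives $\Psi_t<0$ there. This contradicts the requirement that $M$ cross that level for the first time at $t_1$, and hence $M\le B$ on $[0,t]$.
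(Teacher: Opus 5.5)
Your proposal is correct and is essentially the paper's argument: at a maximum of $\Psi$ with $v_\theta\neq0$ you use $v_{\theta\theta}+v=0$ and $\Psi_{\theta\theta}\le0$, and then $p\ge2$ lets the $2v_\theta^2$ term absorb the cross term $\ph_\theta v v_\theta/\ph$ by AM--GM, which is exactly where $C_\ph$ comes from. The paper instead drops the $v^2$ term and works directly at a spacetime maximum of $\Psi$ on $\Sm\times[0,t]$, getting $|v_\theta|\le\frac{|\ph_\theta|}{2\ph}v$ there rather than using your first-crossing argument, but this is only bookkeeping; one small slip is that your Step~1 formula for $\tfrac12\Psi_{\theta\theta}$ is wrong in general (it should be $v_{\theta\theta}(v+v_{\theta\theta})+v_\theta(v_\theta+v_{\theta\theta\theta})$), though Step~2 uses the correct consequence $v_\theta(v_\theta+v_{\theta\theta\theta})\le0$, so the proof is unaffected.
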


\begin{proof}
Fix $t>0$ and suppose the maximum of $\Psi$ on $\Sm\times[0,t]$ is attained at $(\theta_*,t_*)$ with $t_*>0$. At this point
\[
 \Psi_\theta=2v_\theta(v+v_{\theta\theta})=0,
 \qquad
 \Psi_{\theta\theta}\le0.
\]
If $v_\theta(\theta_*,t_*)=0$, then $\Psi=v^2$ and there is nothing to prove. Otherwise
\[
 v_{\theta\theta}+v=0
\]
at $(\theta_*,t_*)$. Differentiating \eqref{eq:v-main} and evaluating at this point gives
\begin{align*}
 0\le \Psi_t
 &=-\frac{2\alpha\lambda v^{p+1}}{\ph}
   -\frac{2\alpha p\lambda v^{p-1}v_\theta^2}{\ph}
   +\frac{2\alpha\lambda\ph_\theta v^pv_\theta}{\ph^2}
   +\frac{\alpha v^p}{\ph}\Psi_{\theta\theta} \\
 &\le
 -\frac{2\alpha\lambda v^{p-1}}{\ph}
 \left(2v_\theta^2-\frac{\ph_\theta}{\ph}vv_\theta\right),
\end{align*}
where $p\ge2$ has been used and the non-positive terms have been discarded in the last line. Hence
\[
 |v_\theta|
 \le \frac{|\ph_\theta|}{2\ph}\,v
 \le \frac{\|\ph_\theta\|_\infty}{2\ph_{min}}v.
\]
Thus $\Psi\le C_\ph v^2$ at every interior spacetime maximum with $v_\theta\ne0$, and \eqref{eq:gradient-est} follows.
\end{proof}

\begin{lemma}[Concentration estimate]\label{lem:concentration}
Suppose
\[
 v(\theta_*,t_*)=\max_{\Sm\times[0,t_*]}v.
\]
For every $0<\varepsilon<1/2$, there exist $\delta>0$ and $C_0>0$, independent of $t_*$, such that
\begin{equation}\label{eq:concentration}
 (1-\varepsilon)v(\theta_*,t_*)
 \le v(\theta,t_*)+C_0\varepsilon
\end{equation}
whenever  $|\theta-\theta_*|<\delta$.
\end{lemma}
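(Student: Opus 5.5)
The plan is to combine the gradient estimate of Lemma~\ref{lem:gradient} with the mean value theorem. First I bound $|v_\theta(\cdot,t_*)|$ linearly in the spacetime maximum $M:=v(\theta_*,t_*)=\max_{\Sm\times[0,t_*]}v$. Lemma~\ref{lem:gradient} applied on $[0,t_*]$ gives
\[
 v_\theta^2(\theta,t_*)\le \Psi(\theta,t_*)\le \max\{C_\ph M^2,\ \Psi_0\},
 \qquad \Psi_0:=\max_{\Sm}(v^2+v_\theta^2)(\cdot,0).
\]
Using $\max\{a,b\}\le a+b$ and $\sqrt{a+b}\le\sqrt a+\sqrt b$, this yields
\[
 |v_\theta(\theta,t_*)|\le \sqrt{C_\ph}\,M+\sqrt{\Psi_0}.
\]
Both constants depend only on $\ph$ and the initial data, not on $t_*$.

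Next I integrate from $\theta_*$ to $\theta$. For $|\theta-\theta_*|<\delta$,
\[
 v(\theta,t_*)\ge M-\delta\bigl(\sqrt{C_\ph}\,M+\sqrt{\Psi_0}\bigr).
\]
Given $\varepsilon\in(0,1/2)$, I choose $\delta:=\varepsilon/\sqrt{C_\ph}$ (shrinking it below $\pi$ if needed, so the interval is a genuine neighborhood on $\Sm$). Then
\[
 v(\theta,t_*)\ge (1-\varepsilon)M-\frac{\varepsilon\sqrt{\Psi_0}}{\sqrt{C_\ph}}.
\]
This is exactly \eqref{eq:concentration} with $C_0:=\sqrt{\Psi_0/C_\ph}$.

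The only point needing care is the second alternative in \eqref{eq:gradient-est}, which involves the initial gradient rather than $M$. It is handled by letting it contribute the additive constant $C_0\varepsilon$, and this works because $\delta$ is proportional to $\varepsilon$. I also note that the hypothesis that $(\theta_*,t_*)$ is a spacetime maximum over all of $[0,t_*]$ is used precisely so that $\max_{\Sm\times[0,t_*]}v^2=M^2$ in Lemma~\ref{lem:gradient}. Both $\delta$ and $C_0$ are then independent of $t_*$, as required. There is no real obstacle beyond this bookkeeping.
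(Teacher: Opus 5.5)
Your proof is correct and matches the paper's argument: you take the bound $|v_\theta(\cdot,t_*)|\le\sqrt{C_I+C_\ph M^2}$ from Lemma~\ref{lem:gradient}, integrate from $\theta_*$, and choose the same $\delta=\varepsilon/\sqrt{C_\ph}$, which gives $C_0=\sqrt{C_I/C_\ph}$. Since $C_\ph\ge1$, your $\delta$ is already below $1/2$, so no shrinking is needed.
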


\begin{proof}
By Lemma~\ref{lem:gradient},
\[
 |v_\theta(\theta,t_*)|
 \le\sqrt{C_I+C_\ph v^2(\theta_*,t_*)},
\]
where $C_I:=\max_{\Sm}(v^2+v_\theta^2)(\theta,0)$ depends only on the initial data. Integrating from $\theta$ to $\theta_*$ and choosing
$\delta=\varepsilon/\sqrt{C_\varphi}$ gives
\eqref{eq:concentration}.
\end{proof}

\section{Type-II singularities}\label{sec:typeII} 

This section proves Theorem~\ref{thm:typeII}. Assume for contradiction that the singularity is of Type-I. Set
\begin{equation}\label{eq:rU}
 r(t)=(T-t)^{1/(\alpha+1)},
 \qquad
 U(\theta,t)=r(t)^\alpha v(\theta,t).
\end{equation}
The Type-I assumption and Lemma~\ref{lem:gradient} imply
\begin{equation}\label{eq:U-bound}
 \|U(\cdot,t)\|_{L^\infty(\Sm)}
 +\|U_\theta(\cdot,t)\|_{L^\infty(\Sm)}\le C.
\end{equation}
At a spatial maximum of $v$, equation \eqref{eq:v-main} gives, in the upper Dini derivative sense,
\[
 \frac{d}{dt}v_{max}(t)
 \le \frac{\alpha}{\ph_{min}}v_{max}^{p+1}(t).
\]
Since $T$ is a curvature blow-up time, integration toward $T$ gives
\begin{equation}\label{eq:lower-blowup}
 v_{max}(t)\ge c(T-t)^{-1/p}=c(T-t)^{-\alpha/(\alpha+1)}
\end{equation}
for $t$ sufficiently close to $T$. Therefore
\begin{equation}\label{eq:U-positive-max}
 \max_{\Sm}U(\theta,t)\ge c_0>0.
\end{equation}

The assumption $\inf_{t<T}L(t)>0$ and \eqref{eq:lambda-upper} give a uniform bound for $\lambda$. From \eqref{eq:X-fixed} and \eqref{eq:U-bound},
\[
 |\partial_tX(\theta,t)|
 \le C(T-t)^{-\alpha/(\alpha+1)}.
\]
Hence there exists a continuous limiting map $X_T:\Sm\to\R^2$ satisfying
\begin{equation}\label{eq:X-limit-rate}
 \sup_{\theta\in\Sm}|X(\theta,t)-X_T(\theta)|
 \le C(T-t)^{1/(\alpha+1)}=Cr(t).
\end{equation}

\begin{lemma}\label{lem:U-lower}
There exist $c>0$ and $t_0<T$ such that
\[
 U(\theta,t)\ge c
\]
for all $\theta\in\Sm$ and $t\in[t_0,T)$.
\end{lemma}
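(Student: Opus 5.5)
We want a uniform positive lower bound on $U=r^\alpha v$ near $T$, under the Type-I assumption. The plan is to derive the evolution equation for $U$ and run a minimum-principle argument for $U_{\min}$ on the time interval $[t_0,T)$.

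\textbf{Evolution of $U$.} We change to the logarithmic time $\tau=-\log(T-t)$. Since $r^{\alpha+1}=T-t$, we have $\frac{d}{dt}r^\alpha=-\frac{\alpha}{\alpha+1}r^{\alpha}(T-t)^{-1}$. Also $r^\alpha v^p=r^{-1}U^p$ because $\alpha p=\alpha+1$. Using \eqref{eq:v-main}, this gives
\[
 U_\tau=\frac{\alpha}{\ph}U^p\bigl(U_{\theta\theta}+U-r^\alpha\lambda\bigr)-\frac{\alpha}{\alpha+1}U .
\]
The length lower bound and \eqref{eq:lambda-upper} give $\lambda\le C$, so the forcing term $r^\alpha\lambda\le C e^{-\alpha\tau/(\alpha+1)}$ decays to zero.

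\textbf{Minimum principle.} At a spatial minimum of $U$ we have $U_{\theta\theta}\ge0$, hence
\[
 \frac{d}{d\tau}U_{\min}\ge -\frac{\alpha}{\alpha+1}U_{\min}-\frac{\alpha C}{\ph_{\min}}U_{\min}^p e^{-c\tau}.
\]
This inequality alone only gives exponential decay of $U_{\min}$, which is not enough. So the direct ODE comparison fails, and geometric information must be used. This is the main obstacle.

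\textbf{Geometric input.} I would exploit the length bound, which is the only place the hypothesis $\inf L>0$ enters. Suppose $U(\theta_1,t)$ is small, i.e.\ $\kappa_\sigma$ at $\theta_1$ is much smaller than $r(t)^{-1}$. By the gradient bound \eqref{eq:U-bound}, $|U_\theta|\le C$, so $U\le U(\theta_1,t)+C|\theta-\theta_1|$ near $\theta_1$. Then the arc over a fixed angular interval around $\theta_1$ has length
\[
 \int\frac{\ph}{v^{1/\alpha}}\,d\theta=r\int\frac{\ph}{U^{1/\alpha}}\,d\theta .
\]
Since $1/\alpha\ge1$, the integrand $(U(\theta_1)+C|\theta-\theta_1|)^{-1/\alpha}$ is non-integrable at $\theta_1$ when $U(\theta_1)$ is small. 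More precisely, this length is at least $c\,r\,|\log U(\theta_1)|$ when $\alpha=1$, and at least $c\,r\,U(\theta_1)^{1-1/\alpha}$ when $\alpha<1$. On the other hand, \eqref{eq:X-limit-rate} controls displacements: $|X(\theta,t)-X_T(\theta)|\le Cr$. So the curve lies within $Cr$ of the limiting map.

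I would then combine this with the fixed-angle velocity \eqref{eq:X-fixed}. Over the angular interval where $U\le\eta$, the tangent $T(\theta)$ turns by a fixed angle while the arc is long, of length $\gg r$. That arc would have to travel a distance $\gg r$ in the remaining time $T-t$ to reach the singular configuration. But speeds are at most $C(T-t)^{-\alpha/(\alpha+1)}$, so in the remaining time points move only $Cr$. This contradicts the turning-arc geometry: an arc turning by a definite angle whose chord is $\gg r$ cannot collapse within distance $Cr$.

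Alternatively, if $U_{\min}(t)\to0$ along a sequence, I would rescale by $r(t)^{-1}$. By \eqref{eq:U-bound}, $U$ is Lipschitz, so by Arzelà--Ascoli the rescaled profiles converge to some $U_\infty\ge0$ with $\max U_\infty\ge c_0$ by \eqref{eq:U-positive-max}. A zero of $U_\infty$ then yields infinite rescaled length of an arc near that angle. I would aim to contradict this with the bounded rescaled diameter coming from \eqref{eq:X-limit-rate}, using that the arc joins points which converge together at a rate $r$.

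I expect this geometric step — turning the smallness of $U$ into a contradiction with \eqref{eq:X-limit-rate} — to be the hard part. I would try first the version for a fixed angular window of width $\pi$: the chord between the endpoints of a long arc of a locally convex curve turning by less than $\pi$ is comparable to its length, since the projection onto the middle tangent direction is monotone.
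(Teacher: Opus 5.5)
There is a genuine gap, and it sits exactly in the step you flag as the hard part. Your first route does not work, because nothing forces the arc over the set where $U$ is small to ``collapse''. Estimate \eqref{eq:X-limit-rate} only says that $X(\cdot,t)$ stays within $Cr$ of $X_T$. On an angular window where $U\le\eta$, the limit $X_T$ may well be a non-degenerate arc. A long arc with a long chord there is then fully consistent with \eqref{eq:X-limit-rate} and with the speed bound, so no contradiction arises.

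Your second route has the right setup: $U(\cdot,t_j)\to U_\infty$, $\max U_\infty\ge c_0$, $U_\infty$ has a zero, and $U_\infty^{-1/\alpha}$ is non-integrable at a zero since $1/\alpha\ge1$. But ``bounded rescaled diameter'' is only available on an angular set where $X_T$ is constant, and you never produce one. An arc taken ``near a zero'' of $U_\infty$ may run into a region where $U_\infty\equiv0$, and there $X_T$ need not be constant.

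The missing idea is to localize inside the positive set. Let $I=(a,b)$ be a connected component of $\{U_\infty>0\}$; it is nonempty and proper by the two facts above.
\begin{itemize}
\item On each $K\Subset I$ one has $U(\cdot,t_j)\ge c_K>0$ for large $j$. Hence $|X_\theta|=r_j\ph\, U^{-1/\alpha}\le C_Kr_j\to0$.
\item Therefore $X_T$ is constant on $I$, say $X_T\equiv x_*$. Then \eqref{eq:X-limit-rate} puts the whole arc $X(I,t_j)$ inside the ball of radius $Cr_j$ about $x_*$.
\item This gives a bound $\int_{a'}^{b'}\ph\, U(\cdot,t_j)^{-1/\alpha}\,d\theta\le C$, uniform in $[a',b']\Subset I$ and in $j$. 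The paper obtains it by integrating $\frac{d}{d\theta}(Y_j\cdot T)=\ph\, U^{-1/\alpha}+Y_j\cdot N$ with $Y_j=(X(\cdot,t_j)-x_*)/r_j$.
\item Your chord comparison also works here. Split $[a',b']$ into boundedly many windows of width at most $\pi/2$. Each chord is at most $2Cr_j$, so each subarc has length at most $2\sqrt2\,Cr_j$.
\item Let $j\to\infty$ and then $a'\downarrow a$. This contradicts the divergence of $\int\ph\, U_\infty^{-1/\alpha}\,d\theta$ at the endpoint $a$, where $U_\infty(\theta)\le C|\theta-a|$.
\end{itemize}
So the arc of unbounded rescaled length must approach a zero of $U_\infty$ from inside a component of the positive set; that is what supplies the bounded rescaled diameter. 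Your minimum-principle computation for $U$ is correct but plays no role in the argument.
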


\begin{proof}
Suppose not. Then there are $t_j\uparrow T$ such that
\[
 \min_{\Sm}U(\cdot,t_j)\longrightarrow0.
\]
By \eqref{eq:U-bound}, after passing to a subsequence,
\[
 U(\cdot,t_j)\longrightarrow U_\infty
\]
uniformly on $\Sm$, where $U_\infty\ge0$ is Lipschitz. By \eqref{eq:U-positive-max}, it is not identically zero, while it has at least one zero. Let $I=(a,b)$ be a connected component of $\{U_\infty>0\}$.

Put $r_j=(T-t_j)^{1/(\alpha+1)}$. Since
\[
 X_\theta(\theta,t_j)
 =r_j\ph(\theta)U(\theta,t_j)^{-1/\alpha}T(\theta),
\]
the length of $X(\cdot,t_j)$ restricted to any compact subinterval of $I$ tends to zero. Hence $X_T$ is constant on $I$. Write $X_T\equiv x_*$ there. Define
\[
 Y_j(\theta)=\frac{X(\theta,t_j)-x_*}{r_j}.
\]
By \eqref{eq:X-limit-rate}, $|Y_j|\le C$ on $I$, and
\[
 (Y_j)_\theta=\ph U(\theta,t_j)^{-1/\alpha}T.
\]
Since $T_\theta=N$,
\begin{equation}\label{eq:Yidentity}
 \frac{d}{d\theta}(Y_j\cdot T)
 =\ph U(\theta,t_j)^{-1/\alpha}+Y_j\cdot N.
\end{equation}
Integrating \eqref{eq:Yidentity} over $[a',b']\Subset I$ and using $|Y_j|\le C$, we have
\[
 \int_{a'}^{b'}\ph U(\theta,t_j)^{-1/\alpha}\,d\theta
 \le 2C+C(b'-a')\le C(2+2m\pi).
\]
Passing to the limit gives
\[
 \int_{a'}^{b'}\frac{\ph(\theta)}{U_\infty(\theta)^{1/\alpha}}\,d\theta\le C,
\]
where $C$ is independent of $a'$ and $b'$. Letting $a'\downarrow a$ and $b'\uparrow b$, we obtain
\begin{equation}\label{eq:Uintegrable}
 \int_I\frac{\ph(\theta)}{U_\infty(\theta)^{1/\alpha}}\,d\theta<\infty.
\end{equation}
But $U_\infty$ is Lipschitz and vanishes at an endpoint of $I$, so near that endpoint
\[
 U_\infty(\theta)\le C|\theta-a|.
\]
Since $1/\alpha\ge1$, the integral in \eqref{eq:Uintegrable} must diverge, a contradiction.
\end{proof}

\begin{proof}[Proof of Theorem~\ref{thm:typeII}]
By Lemma~\ref{lem:U-lower},
\[
 \kappa(\theta,t)=\frac{v^{1/\alpha}}{\ph}
 =\frac{U^{1/\alpha}}{r(t)\ph}
 \ge\frac{c}{(T-t)^{1/(\alpha+1)}}.
\]
Thus
\[
 L(t)=\int_{\Sm}\frac{d\theta}{\kappa(\theta,t)}
 \le C(T-t)^{1/(\alpha+1)}\longrightarrow0,
\]
contradicting $\inf_{t<T}L(t)>0$. Hence Type-I is impossible and the singularity is of Type-II.
\end{proof}

\section{Hamilton rescaling and the local profile}\label{sec:profile}

We now assume that a Type-II singularity occurs at $T<\infty$ and that \eqref{eq:profile-length} holds.

\subsection{Hamilton's rescaling method}

Following Hamilton's rescaling method \cite{Hamilton1995}, choose $s_j\uparrow T$ such that
\[
 (T-s_j)v_{max}^p(s_j)\longrightarrow\infty
\]
and set
\[
 T_j=\frac{T+s_j}{2}.
\]
Choose $(\theta_j,t_j)\in\Sm\times[0,T_j]$ so that
\begin{equation}\label{eq:Hj}
 H_j:=(T_j-t_j)v^p(\theta_j,t_j)
 =\max_{\theta\in\Sm,\,0\le t\le T_j}
 (T_j-t)v^p(\theta,t).
\end{equation}
Then
\[
 H_j\ge\frac12(T-s_j)v_{max}^p(s_j)\longrightarrow\infty.
\]
Since $v$ is bounded on compact time intervals in $[0,T)$, we have $t_j\to T$. After taking a subsequence, assume $t_j\uparrow T$ and put
\begin{equation}\label{eq:qj}
 q_j=v(\theta_j,t_j)\longrightarrow\infty,
 \qquad Q_j=q_j^{1/\alpha}.
\end{equation}
Here $Q_j$ is the spatial dilation factor. Define
\begin{equation}\label{eq:rescale}
 \tau=q_j^p(t-t_j),
 \qquad
 v_j(\theta,\tau)
 =\frac{v(\theta,t_j+\tau/q_j^p)}{q_j}.
\end{equation}
Then
\begin{equation}\label{eq:rescaled-v}
 (v_j)_\tau
 =\frac{\alpha v_j^p}{\ph(\theta)}
 \left((v_j)_{\theta\theta}+v_j-\eta_j(\tau)\right),
 \qquad
 \eta_j(\tau)=\frac{\lambda(t_j+\tau/q_j^p)}{q_j}.
\end{equation}
On the rescaled interval $[-q_j^pt_j,H_j)$, whose endpoints tend to $-\infty$ and $+\infty$, respectively, the maximality in \eqref{eq:Hj} gives
\begin{equation}\label{eq:Ham-bound}
 v_j^p(\theta,\tau)
 \le\frac{H_j}{H_j-\tau}
\end{equation}
whenever the rescaled flow is defined and $\tau<H_j$. In particular,
\[
 v_j(\theta_j,0)=1,
 \qquad
 0<v_j(\theta,0)\le1.
\]

\subsection{Space-time compactness}

\begin{lemma}\label{lem:rescaled-gradient}
For every $R>0$ there exists $C_R$ independent of $j$ such that
\begin{equation}\label{eq:rescaled-grad}
 \sup_{\Sm\times[-R,R]}
 \left(v_j+|(v_j)_\theta|\right)\le C_R
\end{equation}
for all sufficiently large $j$.
\end{lemma}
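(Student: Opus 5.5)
Two bounds are needed: a uniform bound on $v_j$ over $\Sm\times[-R,R]$, and a bound on $(v_j)_\theta$ obtained from the gradient estimate of Lemma~\ref{lem:gradient}, which commutes with the rescaling.

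\emph{Bound on $v_j$.} For $\tau\le R$ and $j$ large, so that $H_j\ge 2R$, the Hamilton bound \eqref{eq:Ham-bound} gives
\[
 v_j^p(\theta,\tau)\le \frac{H_j}{H_j-\tau}\le \frac{H_j}{H_j-R}\le 2 .
\]
This holds for every $\tau\in[-R,R]$. For negative $\tau$ the right side is at most $1$. Since $H_j\to\infty$ and $q_j^pt_j\to\infty$, the interval $[-R,R]$ lies inside the rescaled existence interval $[-q_j^pt_j,H_j)$ for large $j$. Hence $v_j\le 2^{1/p}$ on $\Sm\times[-R,R]$.

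\emph{Bound on $(v_j)_\theta$.} Apply Lemma~\ref{lem:gradient} to the original solution on $\Sm\times[0,t]$ with $t=t_j+R/q_j^p<T_j$. The condition $t<T_j$ is guaranteed because $R/q_j^p<T_j-t_j$, which is equivalent to $R<H_j$. Lemma~\ref{lem:gradient} then gives
\[
 v_\theta^2\le \max\{C_\ph \max_{\Sm\times[0,t]} v^2,\;C_I\},
\]
where $C_I=\max_{\Sm}(v^2+v_\theta^2)(\cdot,0)$. The maximum of $v$ over $[0,t]$ must be controlled, not only over the rescaled window. For times $s\le t\le T_j$, the Hamilton maximality gives
\[
 (T_j-s)v^p(\theta,s)\le H_j=(T_j-t_j)q_j^p .
\]
This alone does not bound $v$ near $T_j$. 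Restricted to $s\le t$, however, we have $T_j-s\ge T_j-t=(H_j-R)/q_j^p$, so
\[
 v^p(\theta,s)\le \frac{H_j}{H_j-R}\,q_j^p\le 2q_j^p .
\]
Hence $\max_{\Sm\times[0,t]}v\le 2^{1/p}q_j$. Dividing the gradient bound by $q_j^2$, and using $q_j\to\infty$, gives
\[
 (v_j)_\theta^2\le \max\{C_\ph 2^{2/p},\;C_I/q_j^2\}\le C_\ph 2^{2/p}+1
\]
for large $j$. This completes \eqref{eq:rescaled-grad} with a constant $C_R$ that is in fact independent of $R$.

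The only delicate point is the second step. The gradient estimate involves the maximum of $v$ over the whole past interval, not just the window $[-R,R]$, so the Hamilton maximality over $[0,T_j]$ must be used to control all earlier times. The initial-data term $C_I$ is harmless because it becomes negligible after dividing by $q_j^2\to\infty$.
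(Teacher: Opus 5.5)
Your proof is correct and essentially the paper's argument: apply Lemma~\ref{lem:gradient} on $\Sm\times[0,t]$ with $t=t_j+\tau/q_j^p$, use the maximality in \eqref{eq:Hj} to get $\sup_{0\le s\le t}v_{max}^p(s)\le \frac{H_j}{H_j-\tau}\,q_j^p$, then divide by $q_j^2$ and let $H_j,q_j\to\infty$. The only cosmetic difference is that you bound $v_j$ separately via \eqref{eq:Ham-bound}, whereas the paper reads off both bounds from the single estimate for $v_j^2+(v_j)_\theta^2$.
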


\begin{proof}
Set
\[
 C_I:=\max_{\Sm}\bigl(v^2+v_\theta^2\bigr)(\theta,0),
\]
which depends only on the initial curve. Since $(v_j)_\theta=v_\theta/q_j$, Lemma~\ref{lem:gradient} gives
\[
 (v_j)_\theta^2+v_j^2
 \le
 \max\left\{
 C_\ph\frac{\sup_{0\le s\le t}v_{max}^2(s)}{q_j^2},
 \frac{C_I}{q_j^2}
 \right\},
\]
where $t=t_j+\tau/q_j^p$. The maximality in \eqref{eq:Hj} gives, for $0\le s\le t<T_j$,
\[
 \frac{v_{max}^p(s)}{q_j^p}
 \le\frac{T_j-t_j}{T_j-s}
 \le\frac{T_j-t_j}{T_j-t}
 =\frac{H_j}{H_j-\tau}.
\]
Taking the power $2/p$ and using $H_j,q_j\to\infty$ proves \eqref{eq:rescaled-grad}.
\end{proof}

\begin{lemma}[Time equicontinuity]\label{lem:time-equicont}
For every $R>0$ there exists $C_R$ independent of $j$ such that
\begin{equation}\label{eq:time-holder}
 |v_j(\theta,\tau_2)-v_j(\theta,\tau_1)|
 \le C_R|\tau_2-\tau_1|^{1/2}
\end{equation}
for $|\tau_i|\le R$ and all sufficiently large $j$.
\end{lemma}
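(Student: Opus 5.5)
The plan is the standard interior H\"older-in-time argument for a parabolic equation with a uniform spatial Lipschitz bound: compare $v_j(\cdot,\tau_2)$ with $v_j(\cdot,\tau_1)$ after averaging over a small spatial window. Fix $R$ and $|\tau_1|,|\tau_2|\le R$. Lemma~\ref{lem:rescaled-gradient} already gives a $j$-independent bound $v_j+|(v_j)_\theta|\le C_R$ on $\Sm\times[-R,R]$. Moreover, $\eta_j(\tau)=\lambda/q_j$, and $\lambda$ is uniformly bounded because of \eqref{eq:profile-length} and \eqref{eq:lambda-upper}. Since $q_j\to\infty$, we get $0<\eta_j\le 1$ for large $j$. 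The only missing ingredient is control of $(v_j)_{\theta\theta}$, which is not available. We avoid it by testing the equation against a smooth cutoff, so that the second derivative is moved onto the test function.

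Write \eqref{eq:rescaled-v} as $\frac{\ph}{\alpha}v_j^{-p}(v_j)_\tau=(v_j)_{\theta\theta}+v_j-\eta_j$. Put $w_j=v_j^{1-p}/(1-p)=-\alpha v_j^{-1/\alpha}$, so that $(w_j)_\tau=v_j^{-p}(v_j)_\tau$. Working with the equation in this form has the drawback that $v_j^{-p}$ may be unbounded where $v_j$ is small. It is cleaner to use the original form directly. Fix $\theta_0$ and $h\in(0,1)$, and let $\zeta$ be a cutoff supported in $[\theta_0-h,\theta_0+h]$ with $\int\zeta=1$, $0\le\zeta\le C/h$, and $|\zeta''|\le C/h^3$. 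Then
\[
\frac{d}{d\tau}\int v_j\zeta\,d\theta=\int\frac{\alpha}{\ph}v_j^p\bigl((v_j)_{\theta\theta}+v_j-\eta_j\bigr)\zeta\,d\theta .
\]
The zero-order terms are bounded by $C_R$. For the second-order term, integrate by parts once:
\[
\int \frac{\alpha}{\ph} v_j^p\zeta\,(v_j)_{\theta\theta}
=-\int (v_j)_\theta\,\partial_\theta\Bigl(\frac{\alpha}{\ph}v_j^p\zeta\Bigr).
\]
The terms in which the derivative falls on $v_j^p$ or on $\ph$ are bounded by $C_R$. The term in which it falls on $\zeta$ is bounded by $C_R\int|\zeta'|\le C_R/h$. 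Integrating once in $\tau$ then gives
\[
\Bigl|\int v_j(\tau_2)\zeta-\int v_j(\tau_1)\zeta\Bigr|\le C_R\frac{|\tau_2-\tau_1|}{h}.
\]

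Next we compare the averages with point values using the Lipschitz bound: $|v_j(\theta_0,\tau_i)-\int v_j(\tau_i)\zeta|\le C_Rh$. Together with the previous estimate this gives
\[
|v_j(\theta_0,\tau_2)-v_j(\theta_0,\tau_1)|\le C_R\Bigl(h+\frac{|\tau_2-\tau_1|}{h}\Bigr).
\]
Choosing $h=|\tau_2-\tau_1|^{1/2}$ (this is admissible when $|\tau_2-\tau_1|<1$) yields \eqref{eq:time-holder}. When $|\tau_2-\tau_1|\ge1$, the estimate follows trivially from $v_j\le C_R$.

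The main point requiring care is the integration by parts: it has to use only first derivatives of $v_j$, so that we never need a bound on $(v_j)_{\theta\theta}$. Because of this, $\zeta$ needs only $|\zeta'|\lesssim h^{-2}$ with $\int|\zeta'|\lesssim 1/h$, and no control on $\zeta''$ is used. We also need the rescaled solution to be defined on $[-R,R]$ for large $j$. This holds because $H_j\to\infty$ and $q_j^pt_j\to\infty$, and it is on this range that Lemma~\ref{lem:rescaled-gradient} applies.
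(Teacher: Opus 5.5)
Your proposal is correct and is essentially the paper's argument. The paper writes the rescaled equation in divergence form $(v_j)_\tau=\partial_\theta F_j+G_j$, with $F_j,G_j$ bounded via Lemma~\ref{lem:rescaled-gradient}, and averages over the box $I_h$ instead of against a smooth cutoff, so your term $\int F_j\zeta'$ becomes the boundary flux $(F_j(\theta_0+h)-F_j(\theta_0-h))/(2h)$; it then combines the Lipschitz comparison $|v_j-A_{j,h}|\le C_Rh$ with the choice $h=\sqrt{\Delta}$ exactly as you do, and the bound on $\zeta''$ you state is indeed never used.
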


\begin{proof}
By \eqref{eq:lambda-upper} and \eqref{eq:profile-length}, $\eta_j\le q_j^{-1}(K_\sigma/\ell_0)^\alpha$, where $\ell_0=\inf_{t<T}L(t)>0$. Write \eqref{eq:rescaled-v} as
\[
 (v_j)_\tau=\partial_\theta F_j+G_j,
\]
where
\[
 \qquad F_j=\frac{\alpha v_j^p}{\ph}(v_j)_\theta,\quad G_j=-\left(\frac{\alpha v_j^p}{\ph}\right)_\theta(v_j)_\theta
      +\frac{\alpha v_j^p}{\ph}(v_j-\eta_j).
\]
Lemma~\ref{lem:rescaled-gradient} gives $|F_j|+|G_j|\le C_R$. Fix an arbitrary $\theta_0\in\Sm$. For $0<h\le1$, let $I_h=(\theta_0-h,\theta_0+h)$ and define
$A_{j,h}(\tau)=(2h)^{-1}\int_{I_h}v_j(\theta,\tau)\,d\theta$, using periodic extension if necessary. The spatial Lipschitz bound and integration of the equation give, respectively,
\[
 |v_j(\theta_0,\tau)-A_{j,h}(\tau)|\le C_Rh,
 \qquad
 |A_{j,h}(\tau_2)-A_{j,h}(\tau_1)|
 \le C_R\left(\frac\Delta h+\Delta\right),
\]
where $\Delta=|\tau_2-\tau_1|$. Therefore
\begin{align*}
 |v_j(\theta_0,\tau_2)-v_j(\theta_0,\tau_1)|
 &\le |v_j(\theta_0,\tau_2)-A_{j,h}(\tau_2)|\\
 &\quad+|A_{j,h}(\tau_2)-A_{j,h}(\tau_1)|\\
 &\quad+|A_{j,h}(\tau_1)-v_j(\theta_0,\tau_1)|\\
 &\le C_R\left(h+\frac\Delta h+\Delta\right).
\end{align*}
For $0<\Delta\le1$, take $h=\sqrt\Delta$. For $\Delta\ge1$, use $|v_j(\theta_0,\tau_2)-v_j(\theta_0,\tau_1)|\le2\|v_j\|_\infty\le C_R\sqrt\Delta$. This proves \eqref{eq:time-holder}.
\end{proof}

\subsection{Disappearance of the nonlocal term}

Define the rescaled Euclidean length and the rescaled numerator of the nonlocal term by
\begin{equation}\label{eq:rescaled-length}
 \begin{aligned}
 \mathcal L_j(\tau)
 &=Q_jL(t_j+\tau/q_j^p)
 =\int_{\Sm}\ph v_j^{-1/\alpha}\,d\theta,\\
 J_j(\tau)&=\int_{\Sm}\ph v_j^{1-1/\alpha}\,d\theta.
 \end{aligned}
\end{equation}
Then
\begin{equation}\label{eq:eta-length}
 \eta_j(\tau)=\frac{J_j(\tau)}{\mathcal L_j(\tau)}.
\end{equation}

\begin{lemma}\label{lem:eta}
On every compact rescaled time interval,
\begin{equation}\label{eq:eta-zero}
 \eta_j\longrightarrow0,
 \qquad
 \eta_j'\longrightarrow0
\end{equation}
uniformly as $j\to\infty$.
\end{lemma}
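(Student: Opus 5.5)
The plan is to write both quantities in original variables and use only two facts: $\lambda$ is uniformly bounded, and on a compact rescaled interval $v$ and $v_\theta$ are at most of order $q_j$. Fix $R>0$ and consider $|\tau|\le R$, so $t=t_j+\tau/q_j^p$. By \eqref{eq:eta-length} and \eqref{eq:rescale},
\[
 \eta_j(\tau)=\frac{\lambda(t)}{q_j},\qquad
 \eta_j'(\tau)=\frac{\lambda'(t)}{q_j^{\,p+1}}.
\]
By \eqref{eq:lambda-upper} and \eqref{eq:profile-length}, $0<\lambda\le (K_\sigma/\ell_0)^\alpha$ with $\ell_0=\inf_{t<T}L(t)>0$. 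Since $q_j\to\infty$, the first convergence $\eta_j\to0$ follows at once, uniformly in $\tau$.

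For the derivative I would use $\lambda=J/L$ from \eqref{eq:lambda-main}, so that
\[
 \lambda'=\frac{J'}{L}-\lambda\frac{L'}{L}.
\]
Each of $J'$ and $L'$ is bounded by polynomial expressions in $v$ and $v_\theta$. For $L'$, formula \eqref{eq:Lprime-basic} gives $|L'|\le 2m\pi(v_{max}+\lambda)$. For $J'$, differentiate under the integral and insert \eqref{eq:v-main}. The weight simplifies because
\[
 \ph\cdot(1-1/\alpha)v^{-1/\alpha}\cdot\frac{\alpha}{\ph}v^{p}=(\alpha-1)v .
\]
Integrating by parts on $\Sm$ then gives
\[
 J'(t)=(\alpha-1)\int_{\Sm}\bigl(v^2-v_\theta^2-\lambda v\bigr)\,d\theta .
\]
Hence $|J'|\le C\bigl(v_{max}^2+\max|v_\theta|^2+\lambda v_{max}\bigr)$.

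Now I would invoke Lemma~\ref{lem:rescaled-gradient}. For $|\tau|\le R$ and large $j$ it gives $v\le C_Rq_j$ and $|v_\theta|\le C_Rq_j$ at time $t$. Combining this with $L\ge\ell_0$ and the bound on $\lambda$ yields
\[
 |\lambda'(t)|\le C_R\,(q_j^2+q_j)\le C_R\,q_j^2 .
\]
Therefore
\[
 |\eta_j'(\tau)|\le C_R\,q_j^{\,1-p}\longrightarrow0,
\]
uniformly for $|\tau|\le R$, since $p=1+1/\alpha\ge2$.

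The only delicate point I anticipate is the computation of $J'$. One must check that the possibly singular factor $v^{-1/\alpha}$ cancels exactly against $v^p$, leaving the benign integrand $(\alpha-1)v(v_{\theta\theta}+v-\lambda)$. Without that cancellation a lower bound on $v$ would be required, and none is available. After the cancellation, a single integration by parts leaves only quantities already controlled by Lemma~\ref{lem:rescaled-gradient}. When $\alpha=1$ the identity gives $J'\equiv0$, consistent with $J=K_\sigma$ being constant, and the estimate holds trivially in that case.
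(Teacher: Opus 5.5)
Your proof is correct and is essentially the paper's argument. Both write the nonlocal term as the quotient $J/L$ and use the same cancellation, giving $J'=(\alpha-1)\int_{\Sm}\bigl(v^2-v_\theta^2-\lambda v\bigr)\,d\theta$, together with the formula for $L'$; both bound these via Lemma~\ref{lem:rescaled-gradient} and conclude by the quotient rule. The only difference is that the paper stays in rescaled variables, where the decay comes from $\mathcal L_j\ge Q_j\ell_0\to\infty$ instead of your factor $q_j^{-(p+1)}$; the two computations are equivalent by scaling and both yield $|\eta_j'|\le C_R\,q_j^{1-p}$.
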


\begin{proof}
Since $\mathcal L_j\ge Q_j\inf_{t<T}L(t)\to\infty$, Jensen's inequality gives
\[
 0<\eta_j\le\left(\frac{K_\sigma}{\mathcal L_j}\right)^\alpha\longrightarrow0
\]
uniformly on compact rescaled time intervals. Differentiating \eqref{eq:rescaled-length}, using \eqref{eq:rescaled-v} and periodic integration by parts, we obtain
\begin{align}
 \mathcal L_j'&=-\int_{\Sm}v_j\,d\theta+2m\pi\eta_j,\label{eq:Ljprime}\\
 J_j'&=(\alpha-1)\int_{\Sm}
 \bigl(-(v_j)_\theta^2+v_j^2-\eta_jv_j\bigr)\,d\theta.\label{eq:Jjprime}
\end{align}
Indeed, the powers of $v_j$ cancel in the first differentiation, while in the second they leave the factor $v_j$:
\[
 J_j'=(\alpha-1)\int_{\Sm}v_j
 \bigl((v_j)_{\theta\theta}+v_j-\eta_j\bigr)\,d\theta.
\]
The right-hand sides of \eqref{eq:Ljprime} and \eqref{eq:Jjprime} are uniformly bounded on each compact rescaled time interval by Lemma~\ref{lem:rescaled-gradient}. The quotient rule therefore gives
\[
 |\eta_j'|
 \le\frac{|J_j'|}{\mathcal L_j}
    +\eta_j\frac{|\mathcal L_j'|}{\mathcal L_j}
 \longrightarrow0
\]
uniformly there as well.
\end{proof}

\subsection{A Harnack estimate}

Since $v_j>0$, define
\begin{equation}\label{eq:zj}
 z_j=\frac{(v_j)_\tau}{v_j}.
\end{equation}
A direct calculation using \eqref{eq:rescaled-v} gives
\begin{equation}\label{eq:z-equation}
 (z_j)_\tau
 =\frac{\alpha v_j^p}{\ph}(z_j)_{\theta\theta}
 +\frac{2\alpha v_j^{p-1}(v_j)_\theta}{\ph}(z_j)_\theta
 +pz_j^2+\frac{\alpha v_j^{p-1}}{\ph}(\eta_jz_j-\eta_j').
\end{equation}

\begin{lemma}[Harnack estimate]\label{lem:harnack}
Fix $R>0$. There exists a sequence
$\varepsilon_j(R)\to0$ such that,
for all sufficiently large $j$,
\begin{equation}\label{eq:harnack}
 z_j(\theta,\tau)
 \ge -\frac1{\tau+R}-\sqrt{\varepsilon_j(R)},
 \qquad -R<\tau\le R.
\end{equation}
Consequently, if $-R<\tau_1<\tau_2\le R$, then
\begin{equation}\label{eq:integrated-harnack}
 v_j(\theta,\tau_2)
 \ge
 \frac{R+\tau_1}{R+\tau_2}
 \exp[-c_j(R)(\tau_2-\tau_1)]
 v_j(\theta,\tau_1),
\end{equation}
where $c_j(R)\to0$.
\end{lemma}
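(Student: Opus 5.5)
We will apply the maximum principle to a shifted version of $z_j$ on the slab $\Sm\times(-R,R]$. The comparison function is $\underline z(\tau)=-\frac{1}{\tau+R}-\sqrt{\varepsilon_j}$, where $\varepsilon_j=\varepsilon_j(R)$ is a small parameter to be fixed. The reaction term $pz_j^2$ in \eqref{eq:z-equation} is the favorable Li--Yau/Hamilton term. The only bad contribution is the nonlocal term
\[
\frac{\alpha v_j^{p-1}}{\ph}(\eta_jz_j-\eta_j').
\]
By Lemma~\ref{lem:rescaled-gradient}, $v_j^{p-1}/\ph\le C_R$ on $\Sm\times[-R,R]$. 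By Lemma~\ref{lem:eta}, $\eta_j,|\eta_j'|\to0$ uniformly there. We therefore set
\[
\varepsilon_j:=\alpha C_R\sup_{[-R,R]}\bigl(\eta_j+|\eta_j'|\bigr)+\frac1j\longrightarrow0 .
\]

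Step 1 is to rewrite the ODE part. At a point where $z_j<0$, the nonlocal term is bounded below by $-\varepsilon_j(1+|z_j|)$. Hence, at a spatial minimum of $z_j$, where $(z_j)_\theta=0$ and $(z_j)_{\theta\theta}\ge0$, we have in the lower Dini sense
\[
\frac{d}{d\tau}z_{\min}\ge p z_{\min}^2-\varepsilon_j|z_{\min}|-\varepsilon_j .
\]

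Step 2 is to verify that $\underline z$ is a strict subsolution of this ODE while $\underline z<0$. We have $\underline z'=(\tau+R)^{-2}$. Write $a=(\tau+R)^{-1}$ and $b=\sqrt{\varepsilon_j}$. Then
\[
p\underline z^2-\varepsilon_j|\underline z|-\varepsilon_j
\ge 2(a+b)^2-\varepsilon_j(a+b)-\varepsilon_j ,
\]
using $p\ge2$. For $\varepsilon_j$ small this is $>a^2+b^2$, since $a^2+2\cdot 2ab+b^2\ge\varepsilon_j(a+b)+\varepsilon_j$ once $b\le1/4$. Indeed $4ab\ge b^2a\cdot 4/b\ge \varepsilon_j a$ and $b^2\ge\varepsilon_j$, so after absorbing, the right side exceeds $a^2=\underline z'$ with margin. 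The point is that $\sqrt{\varepsilon_j}$ is chosen so that $2b^2$ dominates the constant error $\varepsilon_j$, and $4ab$ dominates $\varepsilon_j a$.

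Step 3 is the comparison. As $\tau\downarrow-R$ we have $\underline z\to-\infty$, while $z_j$ is bounded on $\Sm\times[-R,R]$ for each fixed $j$. The bound here need not be uniform: $v_j$ is smooth and positive on the compact slab, because the original flow is smooth on $[0,t_j+R/q_j^p]$ and $t_j+R/q_j^p<T_j$ for large $j$, given $H_j\to\infty$. So $z_j>\underline z$ near $\tau=-R$. At a first touching time, the spatial minimum of $z_j$ equals $\underline z(\tau_0)<0$, and Step 1 together with the strict inequality of Step 2 gives $\frac{d}{d\tau}(z_{\min}-\underline z)>0$. This contradicts first touching, which proves \eqref{eq:harnack}.

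Step 4 is the integration. Since $\partial_\tau\log v_j=z_j\ge -\frac1{\tau+R}-\sqrt{\varepsilon_j}$, integrating from $\tau_1$ to $\tau_2$ gives \eqref{eq:integrated-harnack} with $c_j=\sqrt{\varepsilon_j}$.

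The main obstacle is the bookkeeping in Steps 1--2. We must check that the sign-dependent term $\eta_jz_j$ is handled correctly: it is harmless when $z_j<0$ only up to the $\varepsilon_j|z|$ loss, which is why the shift is by $\sqrt{\varepsilon_j}$ rather than by $\varepsilon_j$. We must also ensure that the constant $C_R$ bounding $v_j^{p-1}$ is uniform in $j$, which Lemma~\ref{lem:rescaled-gradient} provides.
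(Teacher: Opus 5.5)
Your proposal is correct and follows the paper's proof: you use the same barrier $-\frac{1}{\tau+R}-\sqrt{\varepsilon_j}$, compare it with $\min_{\Sm} z_j$ by the maximum principle using Lemmas~\ref{lem:rescaled-gradient} and~\ref{lem:eta}, and then integrate $\partial_\tau\log v_j=z_j$. The only cosmetic difference is bookkeeping. The paper absorbs the $\eta_j|z_j|$ term into $\frac p2 z_j^2$ by Young's inequality, taking $\varepsilon_j=C_R(\|\eta_j\|_\infty^2+\|\eta_j'\|_\infty)$, and compares with the ODE $m'\ge m^2-\varepsilon_j$. You instead keep the linear loss and check a strict barrier inequality at a first touching time. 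Your Step~2 arithmetic is muddled, but it is settled directly by
\[
2(a+b)^2-b^2(a+b)-b^2-a^2=a^2+a(4b-b^2)+b^2-b^3>0 \quad\text{for } 0<b<1.
\]
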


\begin{proof}
Let $m_j(\tau)=\min_{\Sm}z_j(\cdot,\tau)$. At a spatial minimum of $z_j$, equation \eqref{eq:z-equation} and Lemmas~\ref{lem:rescaled-gradient} and \ref{lem:eta} give, in the lower Dini derivative sense,
\[
 m_j'\ge pm_j^2-C_R\|\eta_j\|_\infty|m_j|-C_R\|\eta_j'\|_\infty
 \ge \frac p2m_j^2-\varepsilon_j(R)
 \ge m_j^2-\varepsilon_j(R),
\]
where $p\ge2$ has been used, $\varepsilon_j(R)=C_R(\|\eta_j\|_\infty^2+\|\eta_j'\|_\infty)\to0$, and the norms are taken on $[-R,R]$. The function
\[
 y_j(\tau)=-\frac1{\tau+R}-\sqrt{\varepsilon_j(R)}
\]
satisfies $y_j'\le y_j^2-\varepsilon_j(R)$ and tends to $-\infty$ as $\tau\downarrow-R$. Comparison gives $m_j\ge y_j$, proving \eqref{eq:harnack}. Integrating $\partial_\tau\log v_j=z_j$ and exponentiating gives \eqref{eq:integrated-harnack}, with $c_j(R)=\sqrt{\varepsilon_j(R)}$. 
\end{proof}

\subsection{Identification of the profile}

\begin{proof}[Proof of Theorem~\ref{thm:profile}]
After passing to a subsequence,
\begin{equation}\label{eq:theta-limit}
 \theta_j\longrightarrow\theta_*\in\Sm.
\end{equation}
Set $u_j(\xi,\tau)=v_j(\theta_j+\xi,\tau)$ and $b_j(\xi)=\alpha\ph(\theta_j+\xi)^{-1}$. Then $b_j\to b_\infty:=\alpha\ph(\theta_*+\xi)^{-1}$ smoothly. Lemmas~\ref{lem:rescaled-gradient} and \ref{lem:time-equicont}, followed by Arzel\`a--Ascoli and a diagonal argument, give
\begin{equation}\label{eq:uj-limit}
 u_j\longrightarrow u_\infty
 \qquad\hbox{locally uniformly on }\Sm\times\R.
\end{equation}
By \eqref{eq:Ham-bound},
\begin{equation}\label{eq:u0}
 0\le u_\infty\le1,
 \qquad u_\infty(0,0)=1.
\end{equation}
Fix $\tau_1<\tau_2$ and $R>\max\{|\tau_1|,|\tau_2|\}+1$. Passing to the limit in \eqref{eq:integrated-harnack}, with $R$ fixed, yields
\[
 u_\infty(\xi,\tau_2)\ge
 \frac{R+\tau_1}{R+\tau_2}u_\infty(\xi,\tau_1).
\]
Letting $R\to\infty$ gives
\begin{equation}\label{eq:time-monotone}
 u_\infty(\xi,\tau_2)\ge u_\infty(\xi,\tau_1),
 \qquad\tau_2>\tau_1.
\end{equation}
In particular, \eqref{eq:u0} implies
\begin{equation}\label{eq:u-origin-one}
 u_\infty(0,\tau)=1,\qquad\tau\ge0.
\end{equation}

Let $I$ be the connected component of $\{u_\infty(\cdot,0)>0\}$ containing $0$. By \eqref{eq:time-monotone} and local uniform convergence, $u_j$ has a uniform positive lower bound on each $K\times[\delta,R]$, where $K\Subset I$ and $0<\delta<R$. Standard interior parabolic estimates and Lemma~\ref{lem:eta} yield
\begin{equation}\label{eq:smooth-positive}
 u_j\longrightarrow u_\infty
 \quad\hbox{in }C^{2,1}_{\mathrm{loc}}(I\times(0,\infty)).
\end{equation}
The limit satisfies
\begin{equation}\label{eq:limit-PDE}
 (u_\infty)_\tau=b_\infty(\xi)u_\infty^p
 \bigl((u_\infty)_{\xi\xi}+u_\infty\bigr),
 \qquad b_\infty(\xi)=\frac\alpha{\ph(\theta_*+\xi)},
\end{equation}
and is smooth there by interior regularity.

By \eqref{eq:time-monotone}, we have
$w:=(u_\infty)_\tau\ge0$. Differentiating \eqref{eq:limit-PDE} gives
\begin{equation}\label{eq:w-equation}
 w_\tau=b_\infty u_\infty^pw_{\xi\xi}+cw,
 \qquad
 c=pb_\infty u_\infty^{p-1}(u_\infty)_{\xi\xi}+(p+1)b_\infty u_\infty^p.
\end{equation}
This equation is uniformly parabolic on compact subsets of $I\times(0,\infty)$, with locally bounded coefficients. Since $w\ge0$ and $w(0,\tau)=0$ for every $\tau>0$, the strong maximum principle gives $w\equiv0$ on $I\times(0,\infty)$.

It follows that $u_\infty$ is independent of $\tau$ and
$(u_\infty)_{\xi\xi}+u_\infty=0$ on $I$. Since $u_\infty(0)=1$ and $(u_\infty)_\xi(0)=0$, we obtain $u_\infty(\xi,\tau)=\cos\xi$. By continuity the identity also holds at $\tau=0$, and maximality of the positive component gives $I=(-\pi/2,\pi/2)$. Hence \eqref{eq:uj-limit} gives
\begin{equation}\label{eq:moving-profile-proof}
 \frac{v(\theta_j+\xi,t_j)}{v(\theta_j,t_j)}\longrightarrow\cos\xi
 \quad\hbox{uniformly for }|\xi|\le\pi/2.
\end{equation}
Finally, the gradient estimate gives
\[
 \sup_{|\xi|\le\pi/2}
 \left|\frac{v(\theta_*+\xi,t_j)-v(\theta_j+\xi,t_j)}{q_j}\right|
 \le C|\theta_j-\theta_*|\longrightarrow0.
\]
Taking $\xi=0$ shows that $v(\theta_*,t_j)/q_j\to1$. Therefore
\[
 \frac{v(\theta_*+\xi,t_j)}{v(\theta_*,t_j)}\longrightarrow\cos\xi
\]
uniformly on the same interval. Taking the power $1/\alpha$ and using $v=(\ph\kappa)^\alpha$ proves \eqref{eq:profile-moving} and \eqref{eq:profile-fixed}, and completes the proof.
\end{proof}

\begin{remark}\label{rem:profile-comparison}
Theorems~\ref{thm:typeII} and \ref{thm:profile} give the Type-II conclusion without the symmetry assumption in the isotropic work of Anada--Ishiwata--Ushijima \cite{AnadaIshiwataUshijima2025}, and also allow anisotropy, under the stated length lower bound. For the anisotropic flow of Poon--Tsai \cite[Theorem~4.7 and Corollary~4.9]{PoonTsai2015}, the nonlocal error is absent. Their geometric Type-II result is stated for embedded convex curves, whereas the same profile argument here applies to immersed locally convex curves without symmetry. These conclusions concern selected blow-up subsequences and do not assert uniqueness of the blow-up direction.
\end{remark}

\section{Global compactness and finite-time singularity}\label{sec:global}

\subsection{Global compactness and proof of Theorem~\ref{thm:finite}}\label{subsec:finite-proof}

We now turn to the sufficient conditions for finite-time singularity formation. The following convergence result will be used to prove Theorem~\ref{thm:finite}.

\begin{proposition}[Global compactness and convergence]\label{prop:global}
Assume that a smooth locally convex solution of \eqref{eq:main-flow} exists for all $t\ge0$ and that
\begin{equation}\label{eq:Ls-positive-limit}
 \Ls(\infty):=\lim_{t\to\infty}\Ls(t)>0.
\end{equation}
Then $v=\ks^\alpha$ has positive time-independent upper and lower bounds, and all its spatial derivatives are uniformly bounded. Moreover,
\begin{equation}\label{eq:diss-to-zero}
 \int_{X_t}(\ks-\lambda^{1/\alpha})(\ks^\alpha-\lambda)\,ds\longrightarrow0
 \qquad\hbox{as }t\to\infty,
\end{equation}
and, for every integer $k\ge0$,
\[
 \lambda(t)\longrightarrow C_*^\alpha,\qquad
 v(\cdot,t)\longrightarrow C_*^\alpha\quad\hbox{in }C^k(\Sm),
 \qquad C_*:=\frac{2|\Ws|}{\Ls(\infty)}>0.
\]
After translating each curve so that $X(0,t)=0$, the curves converge smoothly to a translation of $C_*^{-1}\partial\Ws$. In particular,
\begin{equation}\label{eq:C-area}
 A(0)=\frac{|\Ws|}{C_*^2}>0,
 \qquad C_*=\sqrt{\frac{|\Ws|}{A(0)}}.
\end{equation}
\end{proposition}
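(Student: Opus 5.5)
The plan is to get two-sided curvature bounds first, then higher derivative bounds, and finally to identify the limit through the dissipation identity of Lemma~\ref{lem:monotone}.

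\textbf{Length and nonlocal-term bounds.} Since $\Ls$ is non-increasing with positive limit, we have $\Ls(\infty)\le\Ls(t)\le\Ls(0)$. Hence
\[
\Ls(\infty)/\sigma_{max}\le L(t)\le\Ls(0)/\sigma_{min}.
\]
By \eqref{eq:lambda-upper}, $\lambda(t)\le(K_\sigma\sigma_{max}/\Ls(\infty))^\alpha=:\Lambda$. Integrating \eqref{eq:Ls-dissipation} gives
\[
\int_0^\infty D(t)\,dt\le\Ls(0),\qquad D:=\int_{X_t}(\ks-\lambda^{1/\alpha})(\ks^\alpha-\lambda)\,ds .
\]

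\textbf{Uniform upper bound for $v$.} This is the step I expect to be the main obstacle. I would argue by contradiction: suppose $\limsup_{t\to\infty}v_{max}(t)=\infty$.
\begin{enumerate}[label=(\roman*)]
\item On unit windows $[k,k+1]$, pick points maximizing $(k+1-t)v^p$, in analogy with \eqref{eq:Hj}. Rescale exactly as in \eqref{eq:rescale}.
\item Because $\lambda\le\Lambda$ and $\mathcal L_j\ge Q_j\ell_0\to\infty$, the proofs of Lemmas~\ref{lem:rescaled-gradient}, \ref{lem:time-equicont}, \ref{lem:eta} and \ref{lem:harnack} go through verbatim. They give $u_\infty=\cos\xi$ on $(-\pi/2,\pi/2)$ for all $\tau\ge0$ in a rescaled interval whose length tends to infinity.
\item Undoing the scaling, on a time interval of length $\sim H_j q_j^{-p}$ the angular interval $|\theta-\theta_j|<\pi/4$ carries $\ks^\alpha\ge c q_j\gg\Lambda$. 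On this set the integrand of $D$ is comparable to $\ph v\,d\theta$, so $D\ge c q_j$ there.
\item Thus $\int D\,dt\ge c\,q_j^{1-p}H_j$.
\end{enumerate}
The delicate point is to make this quantity uniformly large, which the bare rescaling does not give. I would try first to choose the windows adapted to $H_j$, i.e.\ use the length of the time interval on which the blow-up persists. If that fails, I would fall back on the concentration estimate Lemma~\ref{lem:concentration} and the ODE bound $\frac{d}{dt}v_{max}\le\frac{\alpha}{\ph_{min}}v_{max}^{p+1}$. The aim of that fallback is to show that an unbounded $v_{max}$ forces an infinite amount of dissipation, contradicting $\int_0^\infty D\,dt<\infty$. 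If neither works, I would try a Tso-type auxiliary function such as $v_t/(v-\lambda/2)$ or $\Psi/(v-\varepsilon)$ to obtain $v_{max}\le C$ directly from a maximum-principle inequality.

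\textbf{Lower bound and higher regularity.} Once $v\le M$, the lower bound follows from the ODE for $v_{min}$ in Section~\ref{sec:prelim}. A time-independent bound needs more than that ODE, since it only gives positivity on bounded time intervals. I would instead use
\[
L(t)=\int_{\Sm}\ph v^{-1/\alpha}\,d\theta\le C
\]
together with the gradient bound $|v_\theta|\le C$ from Lemma~\ref{lem:gradient}. If $v(\theta_0,t)=\epsilon$ were small, then $v\le\epsilon+C|\theta-\theta_0|$ near $\theta_0$, and since $1/\alpha\ge1$ the length integral blows up as $\epsilon\to0$. This is the same mechanism as in Lemma~\ref{lem:U-lower}. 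With two-sided bounds, \eqref{eq:v-main} is uniformly parabolic with bounded $\lambda$. Krylov--Safonov/Schauder estimates, bootstrapped using $|\lambda'|\le C$ (computed as in Lemma~\ref{lem:eta}), then bound all derivatives of $v$ uniformly in time.

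\textbf{Identification of the limit.} With uniform $C^k$ bounds, $D(t)$ is uniformly Lipschitz in $t$. Together with $\int_0^\infty D\,dt<\infty$ this gives \eqref{eq:diss-to-zero}. Because $v$ is bounded above and below, $D\to0$ forces $v-\lambda\to0$ in $L^2$, and by interpolation in every $C^k$. Along any sequence $\lambda\to\mu^\alpha$, and then
\[
\Ls=\int\sigma\ph\,v^{-1/\alpha}\,d\theta\to\mu^{-1}\cdot2|\Ws|
\]
by \eqref{eq:wulff-area}. Hence $\mu=C_*$ is unique, and this gives full convergence of $\lambda$ and of $v$.

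The curve is recovered from
\[
X_\theta=\ph v^{-1/\alpha}T\to C_*^{-1}\ph T=C_*^{-1}\partial_\theta\partial\Ws .
\]
After normalizing $X(0,t)=0$, this yields smooth convergence to a translate of $C_*^{-1}\partial\Ws$. Finally, area preservation and scaling give $A(0)=C_*^{-2}|\Ws|$, which is \eqref{eq:C-area}.
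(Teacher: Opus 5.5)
\textbf{Where it fails.} Your length and $\lambda$ bounds, the lower bound for $v$ via the length integral and the Lipschitz bound, higher regularity, $\mathcal D\to0$, identification of $C_*$, and the curve and area conclusions all match the paper. The gap is the uniform upper bound for $v$, which is the heart of the proposition, and none of your three routes closes it. The constants in Lemmas~\ref{lem:rescaled-gradient}--\ref{lem:harnack}, and the conclusion $u_\infty=\cos\xi$, hold only on compact rescaled windows $[-R,R]$. These correspond to original time intervals of length $\sim Rq_j^{-p}$. On such a window $v\le C_Rq_j$, so $\mathcal D\le\int\ph v\,d\theta\le Cq_j$. The dissipation collected there is therefore at most $CRq_j^{1-p}=CRq_j^{-1/\alpha}\to0$, and no contradiction with $\int_0^\infty\mathcal D\,dt<\infty$ can come from the blow-up profile. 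Your step (iii), which claims $\ks^\alpha\ge cq_j$ on $|\theta-\theta_j|<\pi/4$ for the whole interval of length $H_jq_j^{-p}$, is not supported by the rescaling analysis.

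The fallback via $\frac{d}{dt}v_{max}\le Cv_{max}^{p+1}$ has the same defect. It only guarantees $v_{max}\ge q/2$ for a time $\gtrsim q^{-p}$, hence dissipation $\gtrsim q^{1-p}\to0$; a short-lived spike costs negligible dissipation. Two further problems: maximizing only over $[k,k+1]$ need not make the backward rescaled time tend to $-\infty$, which the Harnack step needs, and Lemma~\ref{lem:rescaled-gradient} uses the running maximum from $t=0$. The Tso-type function is only named, with no computation.

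\textbf{The missing idea.} Control the integral $F(t)=\int_{\Sm}v\,d\theta$, which is exactly what the dissipation sees. Since $\mathcal D=\int\ph v\,d\theta-\lambda K_\sigma$ and $\lambda\le\Lambda$, integrability of $\mathcal D$ gives, for any fixed spacing $d_0$, times $t_j\in[(j-1)d_0,jd_0]$ with $F(t_j)\le B$, where $B$ does not depend on $d_0$. Integrating \eqref{eq:v-main} by parts and passing to arc length gives $F'\le-\alpha p\int v_s^2\,ds+C\int v^{2p}\,ds$. A Gagliardo--Nirenberg inequality along $X_t$ is uniform because $L$ is bounded above and below. Combined with $\|v\|_{L^p(ds)}^p=\int\ph v\,d\theta$, it closes this to $F'\le C(F^{p+1}+F^2)$. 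Hence $F$ stays bounded for a fixed time $\delta_0$ after each $t_j$, and choosing $d_0=\delta_0/2$ gives $\sup_tF<\infty$. Only then does Lemma~\ref{lem:concentration}, applied at record times $v(\theta_j,s_j)=\max_{\Sm\times[0,s_j]}v$, turn the $L^1$ bound into $v\le C$. An $L^1$ bound at isolated times cannot do this alone, because Lemma~\ref{lem:gradient} bounds $v_\theta$ only in terms of the running maximum. That is why $F$ must be bounded at all times, with the concentration argument applied only at record times.
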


\begin{proof}
By \eqref{eq:Ls-positive-limit} and positivity of $\sigma$,
\begin{equation}\label{eq:L-two-sided}
 0<\frac{\Ls(\infty)}{\sigma_{max}}
 \le L(t)\le\frac{\Ls(0)}{\sigma_{min}}<\infty.
\end{equation}
Thus \eqref{eq:lambda-upper} gives a positive time-independent upper bound for $\lambda$. Write $\lambda\le\Lambda$ and set
\[
 \mathcal D(t):=-\Ls'(t)
 =\int_{X_t}(v^{1/\alpha}-\lambda^{1/\alpha})(v-\lambda)\,ds.
\]
Lemma~\ref{lem:monotone} gives $\mathcal D\ge0$ and
$\int_0^\infty\mathcal D(t)\,dt=\Ls(0)-\Ls(\infty)<\infty$.
For any fixed $d_0>0$, there are $t_j\in[(j-1)d_0,jd_0]$ with $\mathcal D(t_j)\to0$. By the first variation of anisotropic length,
\begin{equation}\label{eq:D-F}
 \mathcal D(t)=\int_{\Sm}\ph v\,d\theta-\lambda K_\sigma.
\end{equation}
Consequently, for $F(t):=\int_{\Sm}v\,d\theta$,
\[
 F(t_j)\le B:=\frac{1+\Lambda K_\sigma}{\ph_{min}}
\]
for all sufficiently large $j$, where $B$ is independent of $d_0$.

To extend the bound to all times, put $a=\alpha\ph^{-1}$. By \eqref{eq:v-main}, periodic integration by parts and $\lambda\ge0$ give
\[
 F'(t)\le-p\int_{\Sm}av^{p-1}v_\theta^2\,d\theta
 +\int_{\Sm}\left(a+\frac1{p+1}a_{\theta\theta}\right)v^{p+1}\,d\theta.
\]
Since $v_s=(v^{1/\alpha}/\ph)v_\theta$, $d\theta=(v^{1/\alpha}/\ph)\,ds$ and $p-1=1/\alpha$, we obtain
\begin{equation}\label{eq:Fprime-first}
 F'(t)\le-\alpha p\int_{X_t}v_s^2\,ds+C\int_{X_t}v^{2p}\,ds.
\end{equation}
The one-dimensional Gagliardo--Nirenberg inequality gives
\[
 \|v\|_{L^{2p}}\le C\left(
 \|v_s\|_{L^2}^{1/(p+2)}\|v\|_{L^p}^{(p+1)/(p+2)}
 +\|v\|_{L^p}\right),
\]
where all norms are taken along $X_t$ and the constant is uniform by \eqref{eq:L-two-sided}. Taking the power $2p$ and applying Young's inequality yields
\[
 \int_{X_t}v^{2p}\,ds\le\varepsilon\int_{X_t}v_s^2\,ds
 +C_\varepsilon\left(\|v\|_{L^p}^{p(p+1)}+\|v\|_{L^p}^{2p}\right),
\]
where $C_\varepsilon>0$ is a time-independent constant. Using
$\|v\|_{L^p}^p=\int_{\Sm}\ph v\,d\theta\le\ph_{max}F$ and taking $\varepsilon$ sufficiently small to absorb the derivative term in \eqref{eq:Fprime-first}, we obtain
\begin{equation}\label{eq:F-ode}
 F'(t)\le C\bigl(F^{p+1}(t)+F^2(t)\bigr).
\end{equation}
Thus there is a number $\delta_0>0$, independent of $j$, such that a fixed bound for $F(t_j)$ implies a fixed bound for $F$ on $[t_j,t_j+\delta_0]$. Choosing $d_0=\delta_0/2$ makes these intervals overlap, and hence
\begin{equation}\label{eq:F-global}
 \sup_{t\ge0}F(t)<\infty.
\end{equation}
If $v$ were not uniformly bounded, choose $(\theta_j,s_j)$ such that
\[
 v(\theta_j,s_j)=\max_{\Sm\times[0,s_j]}v\longrightarrow\infty.
\]
Lemma~\ref{lem:concentration} gives a fixed-length angle interval on which $v(\cdot,s_j)$ is comparable with this maximum. Thus $F(s_j)\to\infty$, contradicting \eqref{eq:F-global}. Hence
\begin{equation}\label{eq:v-upper-global}
 v(\theta,t)\le C, \quad t\in[0,\infty),
\end{equation}
and Lemma~\ref{lem:gradient} also gives $|v_\theta|\le C$.
If $e_j:=v(\theta_j,t_j)\to0$, this Lipschitz bound would imply
\[
 \Ls(t_j)\ge c\int_0^\delta\frac{dr}{(e_j+Cr)^{1/\alpha}}
 \longrightarrow\infty,
\]
since $1/\alpha\ge1$. Here $c,\delta>0$ are independent of $j$. This contradicts $\Ls(t_j)\le\Ls(0)$. Therefore
\begin{equation}\label{eq:v-lower-global}
 0<c\le v(\theta,t)\le C.
\end{equation}
The same bounds hold for $\lambda$, since it is the arc-length average of $v$.

The equation is now uniformly parabolic. Differentiating $J$ in \eqref{eq:lambda-main} and integrating by parts, as in \eqref{eq:Jjprime}, gives
\[
 J'=(\alpha-1)\int_{\Sm}(-v_\theta^2+v^2-\lambda v)\,d\theta.
\]
This formula, \eqref{eq:Lprime-basic} and \eqref{eq:L-two-sided} bound $\lambda'=J'/L-\lambda L'/L$. Standard interior estimates then give time-independent bounds for $v_t$ and all spatial derivatives of $v$. Differentiating \eqref{eq:D-F} therefore gives $|\mathcal D'(t)|\le C$.
Since $\mathcal D$ is nonnegative and integrable, this bound implies $\mathcal D(t)\to0$. Moreover,
\[
 \mathcal D(t)
 =\int_{\Sm}\frac{\ph}{v^{1/\alpha}}
 (v^{1/\alpha}-\lambda^{1/\alpha})(v-\lambda)\,d\theta
 \ge c\|v(\cdot,t)-\lambda(t)\|_{L^2(\Sm)}^2.
\]
The last inequality follows from \eqref{eq:v-lower-global} and the mean value theorem applied to $s\mapsto s^{1/\alpha}$ on the common positive range of $v$ and $\lambda$.
The uniform spatial derivative estimates and compactness therefore give
\begin{equation}\label{eq:v-minus-lambda}
 \|v(\cdot,t)-\lambda(t)\|_{C^k(\Sm)}\longrightarrow0
 \qquad\hbox{for every integer }k\ge0.
\end{equation}

To identify the limit, use
\[
 \lambda(t)^{1/\alpha}\Ls(t)-2|\Ws|
 =\int_{\Sm}\frac{\sigma\ph}{v^{1/\alpha}}
 (\lambda^{1/\alpha}-v^{1/\alpha})\,d\theta.
\]
Thus $\bigl|\lambda^{1/\alpha}\Ls-2|\Ws|\bigr|
\le\Ls\|v^{1/\alpha}-\lambda^{1/\alpha}\|_\infty\to0$, and
\[
 \lambda(t)^{1/\alpha}\longrightarrow C_*:=\frac{2|\Ws|}{\Ls(\infty)},
 \qquad v(\cdot,t)\longrightarrow C_*^\alpha\quad\hbox{in every }C^k(\Sm).
\]
Finally, the translated curves satisfy
\[
 X(\theta,t)-X(0,t)
 =\int_0^\theta\frac{\ph(r)}{v(r,t)^{1/\alpha}}T(r)\,dr
 \longrightarrow\frac1{C_*}\int_0^\theta\ph(r)T(r)\,dr
\]
smoothly. Since $\partial_\theta(-\sigma N+\sigma'T)=\ph T$, the limit is a translation of $C_*^{-1}\partial\Ws$. Continuity and translation invariance of signed area, together with area preservation, give
$A(0)=|\Ws|/C_*^2>0$, proving \eqref{eq:C-area}.
\end{proof}

\begin{proof}[Proof of Theorem~\ref{thm:finite}]
Suppose first that $A(0)<0$ and the solution were global. A generalized isoperimetric inequality for immersed closed plane curves (see Osserman \cite{Osserman1978}) gives
\[
 L^2(t)\ge4\pi|A(t)|=4\pi|A(0)|.
\]
Therefore
\[
 \Ls(t)\ge\sigma_{min}L(t)
 \ge2\sigma_{min}\sqrt{\pi|A(0)|},
\]
so \eqref{eq:Ls-positive-limit} holds. Proposition~\ref{prop:global} would then imply that the preserved signed area is positive, a contradiction.

Now assume
\[
 0<\Ls^2(0)<4|\Ws|A(0).
\]
Again $A(0)>0$ and the generalized isoperimetric inequality gives \eqref{eq:Ls-positive-limit}. If the flow were global, Proposition~\ref{prop:global} would give
\[
 \ks(\cdot,t)\longrightarrow
 C_*=\sqrt{\frac{|\Ws|}{A(0)}}.
\]
Consequently,
\[
 \Ls(\infty)
 =\frac{1}{C_*}\Ls(\partial\Ws)
 =\frac{2|\Ws|}{C_*}
 =2\sqrt{|\Ws|A(0)}.
\]
Since $\Ls$ is non-increasing,
\[
 \Ls^2(0)\ge\Ls^2(\infty)=4|\Ws|A(0),
\]
contradicting the assumption. Hence in either case the maximal existence time is finite. By the continuation criterion, the curvature blows up there.
\end{proof}

\subsection{Initial curves near the Wulff curve}\label{subsec:wulff-examples}

We give the construction asserted in Remark~\ref{rem:wulff-perturbation}.

For a locally convex immersion parametrized by its tangential angle, its support function relative to the origin is $h=-\langle X,N\rangle$. Then $h_\theta=\langle X,T\rangle$, so
\[
 X=-hN+h_\theta T,\qquad
 X_\theta=(h+h_{\theta\theta})T.
\]
Conversely, any smooth $2m\pi$-periodic function $h$ with $h+h_{\theta\theta}>0$ determines in this way a closed locally convex immersion of rotation index $m$. Here the support function is understood along the immersion, not as the support function of a convex body.

Put $B=|\Ws|$ and define
\[
 c_0:=\frac{\displaystyle\int_{\Sm}\ph(\theta)\cos(\theta/m)\,d\theta}
 {\displaystyle\int_{\Sm}\ph(\theta)\,d\theta},\qquad
 f(\theta):=\cos(\theta/m)-c_0.
\]
Then $\int_{\Sm}\ph f\,d\theta=0$.
For sufficiently small $|\varepsilon|$, the function $h_\varepsilon=\sigma+\varepsilon f$ satisfies $h_\varepsilon+h_{\varepsilon,\theta\theta}>0$, and hence defines a locally convex closed curve
$X_\varepsilon=-h_\varepsilon N+(h_\varepsilon)_\theta T$.
Periodic integration by parts gives
\begin{align*}
 \Ls(X_\varepsilon)
 &=\int_{\Sm}\sigma(h_\varepsilon+h_{\varepsilon,\theta\theta})\,d\theta=2B,\\
 A(X_\varepsilon)
 &=\frac12\int_{\Sm}h_\varepsilon
 (h_\varepsilon+h_{\varepsilon,\theta\theta})\,d\theta
 =B+\frac{\varepsilon^2}{2}\int_{\Sm}(f^2-f_\theta^2)\,d\theta.
\end{align*}
Since
\[
 \int_{\Sm}(f^2-f_\theta^2)\,d\theta
 =m\pi\left(1-\frac1{m^2}\right)+2m\pi c_0^2>0,
\]
we obtain
\[
 0<\Ls^2(X_\varepsilon)=4B^2<4B A(X_\varepsilon)
\]
for all sufficiently small nonzero $\varepsilon$.
Thus, for every $0<\alpha\le1$, Corollary~\ref{cor:finite-typeII} gives a finite-time Type-II singularity for each such initial curve. Moreover, $X_\varepsilon\to-\sigma N+\sigma_\theta T$ in $C^\infty(\Sm)$ as $\varepsilon\to0$. The strict inequality persists under sufficiently small smooth perturbations of each $X_\varepsilon$, so the examples need not be confined to a symmetric class.

\section*{Acknowledgements}
This work is partially supported by the NSF of China 12471193, 12271092, 11871148, 11101078, the 2023 Qinglan Project of Jiangsu Province, and the Jiangsu Provincial Scientific Research Center of Applied Mathematics under Grant No.~BK20233002. 

\section*{Data availability statement}
Data sharing is not applicable to this article as
no datasets were generated or analysed during
the current study.

\section*{Conflict of interest statement}
The authors state no conflict of interest.

\end{document}